\newtheorem{theorem}{Theorem}[section]
\newtheorem{cor}[theorem]{Corollary}
\theoremstyle{remark}
\newtheorem{definition}[theorem]{Definition}
\newcommand{\cd}{\ \Rightarrow \ }
\newcommand{\cp}{\ \stackrel{P}{\longrightarrow} \ }
\newcommand{\ed}{\ \stackrel{d}{=} \ }
\newcommand{\TT}{\mbox{${\mathcal T}$}}
\newcommand{\PP}{\mbox{${\mathcal P}$}}
\newcommand{\GG}{\mbox{${\mathcal G}$}}
\newcommand{\FF}{\mbox{${\mathcal F}$}}
\newcommand{\II}{\mbox{${\mathcal I}$}}
\newcommand{\BB}{\mbox{${\mathcal B}$}}
\newcommand{\MM}{\mbox{${\mathcal M}$}}
\renewcommand{\SS}{\mbox{${\mathcal S}$}}
\newcommand{\Rbold}{\mbox{${\mathbb R}$}}
\newcommand{\Zbold}{\mbox{${\mathbb Z}$}}
\newcommand{\Pbold}{\mbox{${\mathbb P}$}}
\newcommand{\Hbold}{\mbox{${\mathbb H}$}}
\newcommand{\bE}{{\mathbf E}}
\newcommand{\bP}{{\mathbf P}}
\newcommand{\bv}{{\mathbf v}}
\newcommand{\bp}{{\mathbf p}}
\newcommand{\Var}{\mathbf{var}}
\newcommand{\eps}{\epsilon}
\newcommand{\bone}{\mathbf{1}}
\newcommand{\el}{\stackrel{d}{=}}
\newcommand{\bR}{\mathbb{R}}
\newcommand{\bmu}{\mathbf{\mu}}
\newcommand{\bN}{\mathbb{N}}
\newcommand{\bZ}{\mathbb{Z}}
\newcommand{\bzero}{\mathbf{0}}
\begin{document}

\title[A New Approach to P\'olya Urn Schemes]{A New Approach to P\'olya Urn Schemes and Its Infinite Color Generalization}
\author{Antar Bandyopadhyay} 
\address[Antar Bandyopadhyay]{Theoretical Statistics and Mathematics Unit \\
         Indian Statistical Institute, Delhi Centre \\ 
         7 S. J. S. Sansanwal Marg \\
         New Delhi 110016 \\
         INDIA}
\address{Theoretical Statistics and Mathematics Unit, 
         Indian Statistical Institute, Kolkata;
         203 B. T. Road, Kolkata 700108, INDIA}
\email{antar@isid.ac.in}          
\author{Debleena Thacker}  
\address[Debleena Thacker]{Mathematical Statistics,\\
         Centre for Mathematical Sciences, Lund University.
          SE-22100, 
          Lund, Sweden\\}
         
\email{thackerdebleena@gmail.com}

\begin{abstract}
In this work we generalize P\'{o}lya urn schemes with possibly infinitely many colors and extend the earlier models 
described in \cite{BlackMac73, BaTh2013, BaTh14}. 
We provide a novel and unique approach of representing the observed sequence of colors in terms 
a \emph{branching Markov chain} on \emph{random recursion tree}. 
This enables us to derive fairly general asymptotic for our urn schemes. 
We then illustrate through several examples that our method can easily derive the classical results for finite urns, 
as well as, many new results for infinite color urns. 
\end{abstract}

\keywords{infinite color urn, reinforcement processes, urn models, embedding theorem, irreducible, aperiodic positive recurrent matrices, 
random recursive trees, branching random walks, branching Markov chains, random replacement matrices.
} 

\subjclass[2010]{Primary: 60F05, 60F10; Secondary: 60G50}
                                             
\maketitle

\section{Introduction}
\label{Sec: Intro}
 
In recent days various \emph{urn schemes} and their many generalizations have been a key element of study for 
random processes with reinforcements
\cite{Svante2, BaiHu05, Pe07, FlDuPu06, maulik1, DasMau11, CrGeVolWaWa11, LauLimi12, 
CoCoLi13, LaPa13, ChHsYa14, BaTh2013, BaTh14}. 
Starting from the seminal work by P\'olya \cite{Polya30}, 
various types of urn schemes with finitely many colors have been 
widely studied in literature
\cite{Fri49, Free65, AthKar68, BagPal85, Pe07, Pe90, Gouet, Svante1, Svante2,
BaiHu05, FlDuPu06, maulik1, maulik2, DasMau11, ChKu13, ChHsYa14}.
\cite{Pe07} provides an extensive survey of the known results.
However, other than the
classical work by Blackwell and MacQueen \cite{BlackMac73}, there has not been much development of infinite
color generalization of the P\'olya urn scheme. Recently the authors studied a specific class of
urn models with infinitely many colors where the color set is indexed by the $d$-dimensional integer lattice $\Zbold^d$, 
\cite{BaTh2013, BaTh14}. These works nicely complement the work \cite{BlackMac73} by introducing examples of 
infinite color schemes with ``\emph{off-diagonal}'' entries and showed that the asymptotic behavior is essentially 
determined by an underlying random walk. 

In this paper, we further generalize urn schemes with colors index by an arbitrary set $S$ endowed with a $\sigma$-algebra 
$\SS$. As we will see in the sequel, the classical models can be realized as a sub-model when $S$ is finite and in that case
$\SS$ will simply be the power set of $S$, which we will denote by $\wp\left(S\right)$. 
The non-classical case discussed in 
\cite{BlackMac73} can also be obtained by appropriately choosing the measurable space $\left(S, \SS\right)$ as the 
\emph{Borel space} of a Polish space $S$. Further the models described in \cite{BaTh2013, BaTh14} can be obtained by choosing
$S = \Zbold^d$ and $\SS = \wp\left(\Zbold^d\right)$. 

However, it is worthwhile to note that we will only consider \emph{balanced} urn schemes. 
For $S$ countable (finite or infinite), it means that if 
$R := \left(\left(R\left(i,j\right)\right)\right)_{i,j \in S}$ denotes the \emph{replacement matrix}, that is, 
$R\left(i,j\right) \geq 0$ is the \emph{number of balls of color $j$ to be placed in the the urn when the color of the selected ball 
is $i$}, then
for a balanced urn, all row sums of $R$ are constant. In this case, without loss of any generality, 
it is somewhat customary to assume that that $R$ is a \emph{stochastic matrix} \cite{maulik1, maulik2}. For more general $S$ we refer to the
next subsection for further details. 

The main contribution of this work is two ``\emph{representations}'' of the urn model with a Markov chain on $S$, 
which we will call the \emph{associated Markov chain}. 
These representations are novel and useful in deriving asymptotic results for the expected and random configurations of the urn.

There are few standard methods for analyzing finite color urn models which are mainly based on 
martingale techniques \cite{Gouet, maulik1, maulik2, DasMau11}, stochastic approximations \cite{LaPa2013} 
and embedding into continuous time pure birth processes \cite{AthKar68, Svante1, Svante2, BaiHu05}. 
Typically the analysis of a finite color urn is heavily dependent on the 
\emph{Perron-Frobenius theory} \cite{Sene06} of matrices with positive entries
and \emph{Jordon Decomposition} \cite{Curtis93} of finite dimensional matrices
\cite{AthKar68, Gouet, Svante1, Svante2, BaiHu05, maulik1, DasMau11}. 
The absence of such a theory when $S$ is infinite, makes the analysis of urn with infinitely many 
colors quite difficult and challenging. Instead in \cite{BlackMac73} the results were derived using \emph{exchangeability}
of the observed sequence of colors. However, as observed in \cite{BaTh2013}, exchangeability fails in the 
presence of off-diagonal entries and in \cite{BaTh2013}, the authors took a different approach of 
marginally embedding the observed sequence of colors to the underlying random walk sequence. The major
contribution of this work is to further this embedding for any general urn scheme and derive 
asymptotic results by bypassing the standard martingale and matrix theoretic techniques.

\subsection{Model}
\label{Subsec: Model}
We consider the following generalization of P\'olya urn scheme where
the colors are indexed by a non-empty subset $S$ of $\Rbold^d$ for some $d \geq 1$, such that, 
under subspace topology $S$ is a Polish space. A necessary and sufficient for $S$ to be Polish is that it is a $G_{\delta}-$set, that is, $S$ is a countable intersection of open sets, \cite{Boorbaki71}.
We endow $S$ with the corresponding Borel $\sigma$-algebra and denote it by $\SS$. 
Let $\MM\left(S\right)$ and $\PP\left(S\right)$ denote respectively the set of all \emph{finite measures} and 
the set of all \emph{probability measures} on the measurable space $\left(S, \SS\right)$. 
Note that the classical case when $S$ is finite or the non-classical cases discussed in \cite{BaTh2013, BaTh14}
are obtained by taking $S$ a discrete subset of $\Rbold^d$ of appropriate cardinality. 

Let $R : S \times \SS \rightarrow \left[0,1\right]$ be a Markov kernel on $S$, that is, 
for every $s \in S$, as a set-function of $\SS$, $R\left(s, \cdot\right)$ is a probability measure on $\left(S, \SS\right)$; 
and for every $A \in \SS$, the function $s \mapsto R\left(s, A\right)$ is $\SS / \BB_{\left[0,1\right]}$-measurable. 

By a \emph{configuration} of the urn at time $n \geq 0$, we will consider a finite measure
$U_n \in \MM\left(S\right)$, such that, if $Z_n$ represents the randomly chosen color at the $\left(n+1\right)$-th draw
then the conditional distribution of $Z_n$ given the ``\emph{past}'', is given by
\[
\Pbold\left( Z_n \in ds \,\Big\vert\, U_n, U_{n-1}, \cdots, U_0 \right) \propto U_n\left(ds\right).
\]

Formally, starting with $U_0 \in \PP\left(S\right)$ we define 
$\left(U_n\right)_{n \geq 0} \subseteq \MM\left(S\right)$ recursively as follows
\begin{equation}
\label{Equ:Fundamental-Recurssion}
U_{n+1} \left(A\right) = U_n\left(A\right) + R\left(Z_n, A\right), \quad\quad A \in \SS,
\end{equation}
and
\begin{equation}
\Pbold\left( Z_n \in ds \,\Big\vert\, U_n, U_{n-1}, \cdots, U_0 \right) = \frac{U_n\left(ds\right)}{n+1}.
\label{Equ:Conditional-Distribution-of-Zn}
\end{equation}

Notice that, if $S$ is countable then $R$ can be presented as a \emph{stochastic matrix} and then
$R\left(Z_n, \cdot\right)$ is the  $Z_n$-th row of the replacement matrix $R$. 
We will refer to 
the process $\left(U_n\right)_{n \geq 0}$ as the \emph{urn model} with colors index by $S$, 
initial configuration $U_0$ and replacement kernel $R$.

It is worth mentioning here that a little more general model may be obtained by taking $U_0 \in \MM\left(S\right)$ and not just
in $\PP\left(S\right)$. However, asymptotic results for $U_n$, when $U_0 \in \MM\left(S\right)$ can be easily derived from the special case  
$U_0 \in \PP\left(S\right)$. \\

\noindent
{\bf Random configuration of the urn:}
Observe that since $R$ is a stochastic kernel and $U_0 \in \PP\left(S\right)$, we have
\begin{equation}
U_n\left(S\right) = n + 1,
\label{Equ:Sum-of-weights}
\end{equation}
for all $n \geq 0$. With slight abuse of terminology, we will call the random probability measure $\frac{U_n}{n+1}$, 
as the \emph{random configuration of the urn}. In fact, 
\begin{equation}
\Pbold\left(Z_n \in A \,\Big\vert\, U_n, U_{n-1}, \cdots, U_0 \right) = \frac{U_n\left(A\right)}{n+1}, \,\,\, A \in \SS. 
\label{Equ:Choise-Mass-Function-infinite}
\end{equation}
In other words, the $n$-th random configuration of the urn is the conditional distribution of the $(n+1)$-th
selected color, given $U_0, U_1, \ldots, U_n$. \\

\noindent
{\bf Expected configuration of the urn:}
Taking expectation in equation \eqref{Equ:Sum-of-weights}, we get 
\begin{equation}
{\mathbb E}\left[U_n\right] \left(S\right) = {\mathbb E}\left[U_n \left(S\right)\right] = n + 1, 
\label{Equ:Expected-sum-of-weights}
\end{equation}
for all $n \geq 0$. Thus $\frac{{\mathbb E} \left[U_n\right]}{n+1}$ is also a probability measure on $\left(S, \SS\right)$. 
In fact, it is the distribution of $Z_n$, the $(n+1)$-th selected color. This follows by taking expectation on both sides of
equation \eqref{Equ:Choise-Mass-Function-infinite}, 
\begin{equation}
\Pbold\left(Z_n \in A\right) = \frac{{\mathbb E}\left[U_n\right] \left(A\right)}{n+1}, \,\,\, A \in \SS.
\end{equation}

\subsection{Notations}
\label{SubSec:Notations}
Most of the notations used in this paper are 
consistent with the literature on finite color urn models. However, 
we use few specific notations as well, which are given below. 

\begin{itemize}

\item[(i)] All vectors are written as row vectors unless otherwise stated. Column vectors are denoted by $x^{T},$ 
where $x$ is a row vector.

\item[(ii)] For any vector $x$, $x^2$ will denote a vector with the coordinates squared.

\item[(iii)] The standard Gaussian measure on $\Rbold^d$ will be denoted by
$\Phi_d$ with its density given by 
\[
\phi_d\left(x\right) := \frac{1}{\left(2 \pi \right)^{d/2}} \exp\left(- \frac{\| x \|^2}{2}\right), x \in \Rbold^d.
\]
For $d=1$, we will simply write $\Phi$ for the standard Gaussian measure on 
$\Rbold$ and $\phi$ for its density.
\item[(iv)] The symbol $\Rightarrow$ will denote convergence in distribution of random variables.
\item[(v)] The symbol $\stackrel{p}{\longrightarrow}$ will denote convergence in probability.
\item[(vi)] The symbol $\stackrel{w}{\longrightarrow}$ will denote the weak convergence of probability measures in $\PP\left(S\right)$.

\end{itemize}
 
\subsection{Outline of the Paper} 

The rest of the paper is organized as follows: Section \ref{Sec: Main Results} contains 
the two representation theorems, namely, Theorem \ref{Thm: Grand Representation} and
Theorem \ref{Thm: Rep Theorem}, 
which are the most important contribution of this work. 
In Section ~\ref{Sec:Asymptotic}, we derive asymptotic results for the random and expected configurations under fairly 
general conditions.  
In Section \ref{Sec: Applications}, we provide many interesting applications mainly in the context of
infinite color urn schemes. 

\section{Main Results and Their Proofs}
\label{Sec: Main Results}
In this section, we present the two main theorems of this paper. These theorems, which we call the 
\emph{Grand Representation Theorem} (Theorem ~\ref{Thm: Grand Representation}) and the
\emph{Marginal Representation Theorem} (Theorem ~\ref{Thm: Rep Theorem}) 
provide certain ``\emph{couplings}'' of the urn model with the associated Markov chain through the 
\emph{observed sequence of colors}. These results are fairly general and hold for any balance urn schemes 
with colors indexed by an \emph{arbitrary} set $S$.

\subsection{Grand Representation Theorem}
\label{SubSec:Grad-Representation}
The following theorem is a ``\emph{representation}'' of the entire sequence of colors 
$\left(Z_n\right)_{n \geq 0}$ in terms of the associated Markov chain. 
\begin{theorem}\label{Thm: Grand Representation}
Consider an urn model with colors indexed by a set $S$ endowed with a 
$\sigma$-algebra $\SS$. Let $R$ be the replacement kernel and $U_0$ be the initial configuration.
For $n \geq 0$, let $Z_n$ be the random color of the $(n+1)$-th draw. Further, for $n \geq -1$, 
let $\TT_n$ be the \emph{random recursive tree} on 
$\left(n+2\right)$ vertices labeled by $\left\{-1; 0, 1,  \cdots, n\right\}$ with $-1$ as the root. Let $\left(W_n\right)_{n \geq -1}$ be 
the \emph{branching Markov chain} on $\TT := \cup_{n \geq -1} \TT_n$ starting at the root $-1$ and at a position 
$\Delta \not\in S$ with Markov transition kernel 
$\hat{R}$ on $\left(\hat{S}, \hat{\SS}\right)$, where $\hat{S} := \left\{\Delta\right\} \cup S$ and
$\hat{\SS} := \SS \cup \left\{ A \cup \left\{\Delta \right\} \,\Big\vert\, A \in \SS\, \right\}$, is given by 
\begin{equation}
\hat{R}\left(\hat{s}, A\right) 
:= \left\{ \begin{array}{lcl}
   U_0\left(A\right)        & \mbox{\ if\ } & \hat{s} = \Delta;    \\
                            &               &                      \\
   R\left(\hat{s}, A\right) & \mbox{\ if\ } & \hat{s} \neq \Delta, \\                 
   \end{array} 
   \right.
\label{Equ:Definition-of-R-hat}   
\end{equation}
for $\hat{s} \in \left\{\Delta\right\} \cup S$ and $A \in \SS$. 
Then
\begin{equation}\label{Eq:Grand Representation}
\left(Z_n\right)_{n \geq 0} \el \left(W_n\right)_{n \geq 0}.
\end{equation}
\end{theorem}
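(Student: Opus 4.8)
The plan is to show that the two processes $(Z_n)_{n\ge0}$ and $(W_n)_{n\ge0}$ obey exactly the same recursive prescription for the conditional law of the next coordinate given the past, and then to deduce equality of all finite--dimensional distributions by a standard induction.

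First I would rewrite the urn dynamics in closed form. Telescoping \eqref{Equ:Fundamental-Recurssion} gives $U_n(A)=U_0(A)+\sum_{k=0}^{n-1}R(Z_k,A)$ for every $A\in\SS$, so \eqref{Equ:Conditional-Distribution-of-Zn} becomes
\begin{equation*}
\Pbold\big(Z_n\in A\mid Z_0,\dots,Z_{n-1}\big)=\frac{1}{n+1}\Big(U_0(A)+\sum_{k=0}^{n-1}R(Z_k,A)\Big),\qquad A\in\SS.
\end{equation*}
Here $(Z_n)_{n\ge0}$ is the process built (by Ionescu--Tulcea) from $U_0$ and this explicit sequence of past--dependent kernels, so well--definedness is not an issue even for a general $(S,\SS)$.

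Next I would unravel the branching Markov chain on the random recursive tree. By the recursive construction of $\TT=\cup_n\TT_n$, the parent $p(n)$ of vertex $n$ is uniform on the $n+1$ labels $\{-1,0,\dots,n-1\}$ and is independent of $\TT_{n-1}$ together with all the innovations used to assign the positions $W_{-1},W_0,\dots,W_{n-1}$; and, conditionally on everything up to vertex $n-1$ and on $p(n)$, the position $W_n$ has law $\hat R(W_{p(n)},\cdot)$. Also $W_{-1}=\Delta$ is deterministic, and for every $v\ge0$ one has $W_v\in S$ by induction, since $\hat R(\Delta,\cdot)=U_0$ is carried by $S$ and $\hat R(s,\cdot)=R(s,\cdot)$ for $s\in S$; moreover the value attached to a vertex is frozen once that vertex enters the tree, so $W_n$ is unambiguously the value of vertex $n$ in every $\TT_m$, $m\ge n$. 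Since $W_{-1},\dots,W_{n-1}$ are functions of $\TT_{n-1}$ and the innovations, hence independent of $p(n)$, averaging over the uniform choice of $p(n)$ gives
\begin{equation*}
\Pbold\big(W_n\in A\mid W_0,\dots,W_{n-1}\big)=\frac{1}{n+1}\sum_{v=-1}^{n-1}\hat R(W_v,A)=\frac{1}{n+1}\Big(U_0(A)+\sum_{v=0}^{n-1}R(W_v,A)\Big),\qquad A\in\SS,
\end{equation*}
which is the \emph{same} measurable function of $(W_0,\dots,W_{n-1})$ as the previous display is of $(Z_0,\dots,Z_{n-1})$.

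Finally I would run the induction on $n$ that $(Z_0,\dots,Z_n)\el(W_0,\dots,W_n)$: the base case $n=0$ is immediate because $\Pbold(Z_0\in A)=U_0(A)=\hat R(\Delta,A)=\Pbold(W_0\in A)$, and in the inductive step, for bounded measurable $g$ on $S^{n}$ and $A\in\SS$, conditioning on the past and using the first display shows that $\Ex\big[g(Z_0,\dots,Z_{n-1})\bone\{Z_n\in A\}\big]$ is the expectation of a fixed bounded measurable function, determined by $g$ and $A$, evaluated at $(Z_0,\dots,Z_{n-1})$; the identical expression with $W$'s in place of $Z$'s holds on the other side, the induction hypothesis identifies the two, and rectangles of this type generate the product $\sigma$--algebra. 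Consistency over $n$ then yields \eqref{Eq:Grand Representation}. The step that genuinely needs care is the computation in the third paragraph --- in particular the independence of the parent label $p(n)$ from the already--constructed positions, and the observation that placing the root at the ghost state $\Delta$ with $\hat R(\Delta,\cdot)=U_0$ is exactly what reproduces the $U_0(A)/(n+1)$ contribution in the urn's reinforcement rule; everything else is bookkeeping.
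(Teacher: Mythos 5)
Your proposal is correct and follows essentially the same route as the paper's proof: both identify the conditional law of the next color as the uniform mixture $\frac{1}{n+1}\bigl(U_0(A)+\sum_{k=0}^{n-1}R(Z_k,A)\bigr)$, realize the uniform index as the (independent, uniform) parent of vertex $n$ in the random recursive tree with the root carrying the ghost state $\Delta$ and $\hat R(\Delta,\cdot)=U_0$, and match the two prescriptions. Your write-up is in fact somewhat more careful than the paper's, since you make explicit the induction on finite-dimensional distributions and the measurability/independence points that the paper leaves implicit.
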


\begin{proof}
Observe that from \eqref{Equ:Fundamental-Recurssion}, we get for $n \geq 1$, 
\begin{equation}
\frac{U_n}{n+1} = \frac{U_0}{n+1} + \sum_{k=0}^{n-1} \frac{R\left(Z_k, \cdot\right)}{n+1}.
\end{equation}
Moreover, the conditional distribution of $Z_n$ given $ Z_0, Z_1, \cdots, Z_{n-1}$ is given by the random configuration of the urn, 
namely, $\frac{U_n}{n+1}$. Now for $n \geq 0$, let $D_n$ be a \emph{discrete uniform} random variable on the set of indices 
$\left\{-1; 0, 1, \cdots, n-1\,\right\}$ such that the sequence $\left(D_n\right)_{n \geq 0}$ are independent and for each $n \geq 0$ 
the random variable $D_n$ is independent of $Z_0, Z_1, \cdots, Z_{n-1}$. Then
\begin{equation}
\left(Z_n \,\Big\vert\, Z_{n-1}, Z_{n-2}, \cdots, Z_1, Z_0\right) \ed R\left(Z_{D_n}, \cdot\right). 
\end{equation}
Further from definition we get
\begin{equation}
\left(W_n \,\Big\vert\, W_{n-1}, W_{n-2}, \cdots, W_1, W_0\right) \ed R\left(W_{D_n}, \cdot\right). 
\end{equation}
This completes the proof. 
\end{proof}

\subsection{Marginal Representation Theorem}
\label{SubSec:Marginal-Representation}
Our next result is a ``\emph{representation}'' of the marginal distribution for the randomly chosen color $Z_n$ in terms of the 
marginal distribution of the corresponding Markov chain sampled at random but independent times. As we will see from the proof it is an
immediate corollary of Theorem ~\ref{Thm: Grand Representation}. 
\begin{theorem}
\label{Thm: Rep Theorem} 
Consider an urn model with colors indexed by a set $S$ endowed with a 
$\sigma$-algebra $\SS$. Let $R$ be the replacement kernel and $U_0$ be the initial configuration.
For $n \geq 0$, let $Z_n$ be the random color of the $(n+1)$-th draw.
Let $\left(X_n\right)_{n \geq 0}$ be the associated Markov chain on $S$ with transition kernel $R$ and initial distribution $U_0$. 
Then there exists an increasing non-negative sequence of stopping 
times $\left(\tau_n\right)_{n \geq 0}$ with $\tau_0=0$, which are independent of the Markov chain $\left(X_n\right)_{n \geq 0}$, 
such that, 
\begin{eqnarray}\label{Eq: Rep}
Z_n \el X_{\tau_{n}},
\end{eqnarray} 
for any $n \geq 0$. Moreover, as $n \to \infty$,
\begin{equation}\label{Eq: SLT}
\frac{\tau_{n}}{\log n}\longrightarrow 1 \text{ a.s. } 
\end{equation} 
and 
\begin{equation}\label{Eq: CLTT}
\frac{\tau_{n}-\log n}{\sqrt{\log n}}\stackrel{d}{\longrightarrow} N\left(0,1\right).
\end{equation} 
\end{theorem}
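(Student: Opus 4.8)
The plan is to derive Theorem~\ref{Thm: Rep Theorem} as a consequence of the Grand Representation Theorem by exploiting the tree structure of the branching Markov chain. First I would recall from Theorem~\ref{Thm: Grand Representation} that $\left(Z_n\right)_{n \geq 0} \el \left(W_n\right)_{n \geq 0}$, where $W_n$ is the value at vertex $n$ of a branching Markov chain on the random recursive tree $\TT$. The key observation is that the value $W_n$ is obtained by running the kernel $\hat{R}$ along the unique path in $\TT$ from the root $-1$ down to vertex $n$; since the first step out of $\Delta$ uses $\hat{R}\left(\Delta, \cdot\right) = U_0$ and every subsequent step uses $R$, the conditional law of $W_n$ given the tree, and given that the path from $-1$ to $n$ has length $\ell_n$ (counting the edges after the root, so that there are $\ell_n$ applications of $R$), is exactly the law of $X_{\ell_n}$, where $\left(X_k\right)_{k \geq 0}$ is the Markov chain with kernel $R$ and initial law $U_0$. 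Here $\ell_n$ is the depth of vertex $n$ in $\TT_n$ measured from its child-of-root ancestor — more precisely, if $d(n)$ denotes the graph distance from $-1$ to $n$ in the recursive tree, then $\ell_n = d(n) - 1$, and we set $\tau_n := \ell_n$. Because the branching structure along distinct root-to-vertex paths uses fresh independent copies of the kernel at each vertex, and the tree $\TT$ is built independently of those kernel applications, one gets the joint identity $\left(Z_n\right)_{n\ge0} \el \left(X_{\tau_n}\right)_{n\ge0}$ with $\left(\tau_n\right)_{n \geq 0}$ independent of $\left(X_k\right)_{k \geq 0}$; the marginal statement \eqref{Eq: Rep} follows immediately. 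The sequence $\left(\tau_n\right)$ is non-negative, integer-valued, with $\tau_0 = 0$ (vertex $0$ is a child of the root), and monotonicity together with the stopping-time property should follow from a careful sequential construction of the recursive tree.

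Next I would establish the asymptotics \eqref{Eq: SLT} and \eqref{Eq: CLTT} for $\tau_n$. The crucial point is that $\tau_n = d(n) - 1$ is, up to the additive constant, the \emph{depth} (insertion depth) of the $n$-th inserted vertex in a random recursive tree. This is a classical and well-studied quantity: when vertex $n$ is added it attaches uniformly to one of the $n+1$ existing vertices $\{-1, 0, \ldots, n-1\}$, and a standard representation writes the depth as a sum of independent indicator variables, $d(n) = 1 + \sum_{k=1}^{n-1} \bone_{k \rightsquigarrow n}$, where the events $\{k \rightsquigarrow n\}$ (vertex $k$ lies on the path from root to $n$) are independent with $\Pbold\left(k \rightsquigarrow n\right) = \tfrac{1}{k+1}$ or a similar harmonic-type probability. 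Consequently $\Ebold\left[\tau_n\right] = \sum_{k} \tfrac{1}{k+1} = \log n + O(1)$ and $\Var\left(\tau_n\right) = \log n + O(1)$. The strong law \eqref{Eq: SLT} then follows from the Borel–Cantelli lemma along a subsequence combined with monotonicity in $n$ (the depths are not monotone in general, but one can control fluctuations), and the central limit theorem \eqref{Eq: CLTT} follows from the Lindeberg–Feller theorem applied to the triangular array of independent indicators, since the Lindeberg condition is trivially satisfied for bounded summands with variance tending to infinity. I would cite the standard references on random recursive trees for these facts rather than reprove them.

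The step I expect to require the most care is the precise matching between the branching-Markov-chain description of $W_n$ and the stopping-time description $X_{\tau_n}$, carried out \emph{jointly in $n$} and with the independence of $\left(\tau_n\right)_{n\ge0}$ from $\left(X_k\right)_{k \geq 0}$ genuinely respected. One must be careful that, although $Z_n$ and $Z_m$ are strongly dependent through the shared ancestry in the tree, each individual $W_n$ still has the law of $X_{\ell_n}$ with a single chain $X$: the subtlety is that the kernel applications along overlapping path segments are shared, so a naive coupling would use different independent chains for different $n$. The resolution is that we only claim a \emph{marginal} identity $Z_n \el X_{\tau_n}$ for each fixed $n$ (equation \eqref{Eq: Rep} is stated for each $n \geq 0$, not jointly), so it suffices to note that conditionally on $\tau_n = \ell$, the vector of kernel applications along the unique depth-$\ell$ path is an i.i.d.-in-steps run of $R$ from $U_0$, which is precisely the law of $\left(X_0, \ldots, X_\ell\right)$; integrating over the law of $\tau_n$ and using that the tree (hence $\tau_n$) is constructed independently of these kernel draws gives \eqref{Eq: Rep} with the required independence. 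Making the stopping-time claim rigorous then amounts to exhibiting a filtration — the natural one generated by the sequential tree-growth — with respect to which each $\tau_n$ is a stopping time; since $\tau_n$ is determined by the first $n$ insertions, this is essentially bookkeeping once the joint construction is set up.
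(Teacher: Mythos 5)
Your proposal follows essentially the same route as the paper: identify $\tau_n$ with the depth (minus one) of vertex $n$ in the random recursive tree, observe that conditionally on the path length the value $W_n$ has the law of $X_{\tau_n}$, represent $\tau_n$ as a sum of independent indicators (the paper gets means $\frac{1}{j+2}$ since there are $n+2$ vertices), and deduce the strong law and CLT from the mean and variance asymptotics $\sim \log n$ --- the paper uses Kronecker's lemma and the Lyapunov CLT where you invoke Borel--Cantelli and Lindeberg--Feller, an immaterial difference. One caution: the joint identity $\left(Z_n\right)_{n\ge 0} \el \left(X_{\tau_n}\right)_{n\ge 0}$ asserted in your first paragraph is in general false --- the paper explicitly remarks on this, since $\left(X_{\tau_n}\right)_{n\ge 0}$ is Markovian while $\left(Z_n\right)_{n\ge 0}$ need not be --- but you correctly retract to the marginal statement in your final paragraph, which is all the theorem requires.
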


\noindent
{\bf Remark:}
Recall that the probability mass function of $Z_n$ is $\frac{1}{n+1}\bE\left[U_n\right]$, which implies that \eqref{Eq: Rep} will be 
useful in deriving results for the expected configuration. A version of this result was obtained in Proposition 7 in \cite{BaTh2013}, 
which was restricted to the case when $\left(X_n\right)_{n \geq 0}$ is a bounded increment random walk. Here however, the result is 
for any general Markov chain $\left(X_n\right)_{n \geq 0}.$ \\

\noindent
{\bf Remark:}
It is worthwhile to note here that the following may not be necessarily true
\begin{equation}\label{Eq:Z_nnotMarkov}
\left(Z_n\right)_{n \geq 0}\el \left(X_{\tau_n}\right)_{n \geq 0}.
\end{equation} 
This is because $\left(Z_n\right)_{n \geq 0}$ is not necessarily Markov, but $\left(X_{\tau_n}\right)_{n \geq 0}$ is Markovian. 
In fact, the law of the process $\left(Z_n\right)_{n \geq 0}$ is more complicated as presented in 
Theorem ~\ref{Thm: Grand Representation}.\\

\begin{proof}
Let $(1 + \tau_n)$ be length of the unique path from the vertex $n$ to the root $-1$ in the random 
recursive tree $\TT_n$ with $n+1$ vertices, as defined in 
the Theorem ~\ref{Thm: Grand Representation}. Then from definition it follows that
\begin{eqnarray}
W_n \el X_{\tau_{n}},
\end{eqnarray} 
and thus \eqref{Eq: Rep} follows from Theorem ~\ref{Thm: Grand Representation}. 

Now, for $0 \leq j \leq n-1$, let $I_j$ be the indicator that the vertex $j$ lies on the path from the root $-1$ to the vertex $n$. 
Then by construction $\left(I_j\right)_{0 \leq j \leq n-1}$ are independent Bernoulli variables with 
$\bE\left[I_j\right] = \frac{1}{j+2}$, $0 \leq j \leq n-1$. Also, 
\begin{equation}
\tau_n = \sum_{j=0}^{n-1}  I_j.
\label{Equ:Representation-of-tau}
\end{equation}

Notice that $\Var\left(I_j\right) = \frac{1}{j+2} \left(1 - \frac{1}{j+2}\right)$, $0 \leq j \leq n-1$, thus
\begin{equation}
\bE\left[\tau_n\right] = \sum_{j=0}^{n-1} \frac{1}{j+2} \sim \log n 
\mbox{\ \ and\ \ }
\Var\left(\tau_n\right) = \sum_{j=0}^{n-1} \frac{1}{j+2} \left(1 - \frac{1}{j+2}\right) \sim \log n, 
\label{Equ:Mean-Variance-of-tau}
\end{equation}
as $n \rightarrow \infty$. So by Kronecker's Lemma (see (8.5) on page 63 of \cite{Durrett2010}) it follows that
\begin{equation}
\frac{\tau_{n}}{\log n} \longrightarrow 1 \text{ a.s.}
\label{Equ:Asymptotic-tau}
\end{equation} 
as $n \rightarrow \infty$, proving \eqref{Eq: SLT}. 
Further, \eqref{Eq: CLTT} follows by an easy application of the Lyapunov Central Limit Theorem 
(see Theorem 27.3 on page 362 of \cite{Bi95}). 
\end{proof}

\section{Weak Asymptotic of the Urn Configuration}
\label{Sec:Asymptotic}
In this section we state and prove some very general results for the asymptotic of the random and expected configurations of 
our general urn scheme $\left(U_n\right)_{n \geq 0}$, defined in Section ~\ref{Subsec: Model}. These results will be proved using the
two representations theorems given in Section ~\ref{Sec: Main Results}. We start by establishing an asymptotic result for the
branching Markov chain $\left(W_n\right)_{n \geq -1}$ as defined in Theorem ~\ref{Thm: Grand Representation}. 
For this and the later sections, we assume that $\PP\left(S\right)$ is endowed with the topology of \emph{weak convergence} and any 
limit statement in $\PP\left(S\right)$ is with respect to the topology of weak convergence.

\subsection{Asymptotic of Branching Markov Chain on Random Recursive Tree}
\label{SubSec:Asymptotic-BRW-on-RRT}
Let us first recall that $\left(W_n\right)_{n \geq -1}$ is defined as
the \emph{branching Markov chain} on the (infinite) random recursive tree
$\TT := \cup_{n \geq -1} \TT_n$, starting at the root $-1$ and at a position 
$\Delta \not\in S$ with Markov transition kernel 
$\hat{R}$ on $\left(\hat{S}, \hat{\SS}\right)$ given in ~\eqref{Equ:Definition-of-R-hat}. 
Define $\GG_n := \sigma\left(W_0, W_1, \cdots, W_{n-1}\right)$, $n \geq 0$. 
Let $Q_n$ be a version of the regular conditional distribution of $W_n$ given $\GG_n$. Note that $Q_n$ exists and is almost surely unique
and proper, as $S$ is a Polish space and $\SS$ is the corresponding Borel $\sigma$-algebra. Further, 
let $q_n$ be a version of the regular conditional distribution of $W_n$ given $\GG_n$ and $\TT_n$. Once again
$q_n$ exists and is almost surely unique and proper. It is worth noting here that 
for any $A \in \SS$, $\bE\left[ q_n\left(A\right) \,\Big\vert\, \GG_n \,\right] = Q_n\left(A\right)$. 

Further, recall $1 + \tau_n$ is defined to be the length of the unique path from the vertex $n$ to the root $-1$ in the random 
recursive tree $\TT$. 

Also recall that 
$\left(X_n\right)_{n \geq 0}$ denotes a Markov chain with state space $S$, transition kernel $R$ and starting distribution $U_0$. 

We now make the following assumption:
\begin{itemize}

\item[{\bf (A)}] There exists a (non-random) probability $\Lambda$ on $\left(\Rbold^d, \BB_{{\mathbb R}^d}\right)$ 
            and a vector $\bv \in \Rbold^d$, 
            and two functions $a: \Rbold_+ \rightarrow \Rbold$ and 
            $b: \Rbold_+ \rightarrow \Rbold_+$, such that, for any starting distribution $U_0$,
            \begin{equation}
            \frac{X_n - a\left(n\right) \bv}{b\left(n\right)} \cd \Lambda.
            \label{Equ:Assumption}
            \end{equation}

\end{itemize}

\begin{theorem}
\label{Thm:Asymptotic-BRW-on-RRT}
Suppose assumption {\bf (A)} holds. 
Let $q_n^{\text{cs}}$ be the conditional distribution of $\frac{W_n - a\left(\tau_n\right) \bv}{b\left(\tau_n\right)}$ given
           $\GG_n$ and $\TT_n$, that is, a scaled and centered version of $q_n$ with centering $a\left(\tau_n\right) \bv$ and scaling
           $b\left(\tau_n\right)$. Then as $n \rightarrow \infty$, 
           \begin{equation}
           \label{Equ:Convergence-of-qncs}
           q_{n}^{cs}\stackrel{p}{\longrightarrow} \Lambda \mbox{\ in\ } \PP\left(\Rbold^d\right). 
           \end{equation}
Moreover, let $Q_n^{\text{cs}}$ is the conditional distribution of $\frac{W_n - a\left(\log n\right) \bv}{b\left(\log n\right)}$ given
$\GG_n$, that is, a scaled and centered version of $Q_n$ with centering by $a\left(\log n\right) \bv$ and scaling
by $b\left(\log n\right)$, then
\begin{itemize}
           
\item[(a)] If $a \equiv 0$ and $b \equiv 1$, then 
           \begin{equation}
           \label{Equ:Convergence-of-Qn}
           Q_{n}^{\text{cs}} = Q_{n}\stackrel{p}{\longrightarrow} \Lambda \mbox{\ in\ } \PP\left(S\right). 
           \end{equation}
           
\item[(b)] Suppose $a = 0$  and $b$ is regularly varying function, then
           \begin{equation}
           Q_{n}^{cs}\stackrel{p}{\longrightarrow} \Lambda \mbox{\ in\ } \PP\left(\Rbold^d\right).
           \label{Equ:Convergence-of-Qncs-a0}
           \end{equation}      

\item[(c)] Suppose $a$ is differentiable and $\displaystyle{\lim_{x \rightarrow \infty} a'\left(x\right) = \tilde{a} < \infty}$. Also  
           assume $b$ is regularly varying
           and $\displaystyle{\lim_{x \rightarrow \infty} \frac{\sqrt{x}}{b\left(x\right)} = \tilde{b} < \infty}$ then
           \begin{equation}
           Q_{n}^{cs}\stackrel{p}{\longrightarrow} \Xi \mbox{\ in\ } \PP\left(\Rbold^d\right),
           \label{Equ:Convergence-of-Qncs-Xi}
           \end{equation}
           where $\Xi$ is $\Lambda$ if $\tilde{a} = 0$ or $\tilde{b} = 0$, otherwise, it is given by the convolution of 
           $\Lambda$ and $\mbox{Normal}\left(0, \tilde{a}^2 \tilde{b}^2\right) \bv$.
           
\end{itemize}
\end{theorem}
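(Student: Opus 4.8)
The plan is to combine the two representation theorems of Section~\ref{Sec: Main Results} with Assumption {\bf (A)}, the argument being powered by the observation --- already present in the proof of Theorem~\ref{Thm: Rep Theorem} --- that the branching Markov chain restricted to the unique path from the root to $n$ in $\TT_n$ is a copy of $\left(X_k\right)_{k \ge 0}$ started from $U_0$. Consequently, conditionally on the tree $\TT_n$, which makes $\tau_n$ a constant, the leaf position $W_n$ has the law of $X_{\tau_n}$. Writing
\[
\Psi\left(m\right) \;:=\; \mathcal{L}\!\left( \frac{X_m - a\left(m\right)\bv}{b\left(m\right)} \right), \qquad m \ge 0,
\]
a deterministic $\PP\left(\Rbold^d\right)$-valued function of $m$, this says that on every realisation $q_n^{\text{cs}} = \Psi\left(\tau_n\right)$.

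With this in hand the first assertion is essentially immediate. Assumption {\bf (A)} says precisely that $\Psi\left(m\right) \stackrel{w}{\longrightarrow} \Lambda$ as $m \to \infty$ --- and it matters that {\bf (A)} is required for \emph{every} initial law, since $U_0$ is what sits at the top of the path --- while Theorem~\ref{Thm: Rep Theorem} gives $\tau_n/\log n \to 1$ a.s., hence $\tau_n \to \infty$ a.s. Evaluating a convergent function along an almost surely divergent integer sequence, $q_n^{\text{cs}} = \Psi\left(\tau_n\right) \to \Lambda$ almost surely in $\PP\left(\Rbold^d\right)$, which is \eqref{Equ:Convergence-of-qncs}; convergence in probability then follows. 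The only bookkeeping is that $\PP\left(\Rbold^d\right)$ under the weak topology is Polish, so it suffices to test against a fixed countable convergence-determining family of bounded continuous $f$, for which $\int f\,d\Psi\left(\tau_n\right) \to \int f\,d\Lambda$ because a convergent real sequence, re-indexed along an almost surely divergent one, still converges almost surely.

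For the statements about $Q_n^{\text{cs}}$ one must average the tree out and replace the random index $\tau_n$ by the deterministic $\log n$. When $a \equiv 0$, $b \equiv 1$ there is nothing to replace or rescale: the measures remain on $S$, and the tree is removed by combining the identity $Q_n\left(A\right) = \bE\left[ q_n\left(A\right) \,\Big\vert\, \GG_n \,\right]$ with the observation that weak convergence in probability of $q_n^{\text{cs}}$ to the \emph{deterministic} $\Lambda$ survives conditional expectations --- an $L^1$-contraction applied to $\int f\,dq_n^{\text{cs}}$ for each bounded continuous $f$ --- which gives \eqref{Equ:Convergence-of-Qn}. For $a \equiv 0$ with $b$ regularly varying, Potter's bounds and $\tau_n/\log n \to 1$ force $b\left(\tau_n\right)/b\left(\log n\right) \to 1$ a.s., so $W_n/b\left(\log n\right)$ inherits the conditional weak limit $\Lambda$ of $W_n/b\left(\tau_n\right)$, and the same averaging yields \eqref{Equ:Convergence-of-Qncs-a0}. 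For part (c) one starts from
\begin{multline*}
\frac{W_n - a\left(\log n\right)\bv}{b\left(\log n\right)}
\;=\; \frac{b\left(\tau_n\right)}{b\left(\log n\right)} \cdot \frac{W_n - a\left(\tau_n\right)\bv}{b\left(\tau_n\right)} \\
{}+\; a'\left(\xi_n\right) \cdot \frac{\tau_n - \log n}{\sqrt{\log n}} \cdot \frac{\sqrt{\log n}}{b\left(\log n\right)} \, \bv ,
\end{multline*}
where the mean value theorem supplies some $\xi_n \to \infty$ between $\tau_n$ and $\log n$. The first summand has conditional-on-$\TT_n$ weak limit $\Lambda$ as before; in the second, $a'\left(\xi_n\right) \to \tilde{a}$, $\sqrt{\log n}/b\left(\log n\right) \to \tilde{b}$, and $\left(\tau_n - \log n\right)/\sqrt{\log n} \stackrel{d}{\longrightarrow} N\left(0,1\right)$ by Theorem~\ref{Thm: Rep Theorem}, so the second summand converges in distribution to $\tilde{a}\,\tilde{b}\,N\left(0,1\right)\bv$, a random shift produced purely by the recursive-tree combinatorics governing $\tau_n$. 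Averaging the tree out of the whole expression then convolves $\Lambda$ with the law of this shift, giving \eqref{Equ:Convergence-of-Qncs-Xi} with $\Xi = \Lambda * \left( N\left(0,\tilde{a}^2\tilde{b}^2\right)\bv \right)$, which collapses to $\Lambda$ exactly when $\tilde{a} = 0$ or $\tilde{b} = 0$.

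The genuinely delicate step is this last averaging in (c): one has to show that, once the tree is integrated out, the ``$\Lambda$-part'' (the tree-conditional law, deterministic in the limit) and the ``Gaussian part'' (the centred depth $\left(\tau_n - \log n\right)/\sqrt{\log n}$, a functional of the recursive tree alone) become asymptotically independent, so that the limit is genuinely a convolution and not a correlated perturbation of $\Lambda$. Establishing this decoupling --- in effect, that the conditional law of $\tau_n$ given the observed colours is asymptotically its unconditional law --- together with the uniform control of $b\left(\tau_n\right)/b\left(\log n\right)$ via Potter's inequality over the almost surely shrinking window in which $\tau_n/\log n$ lives, is the crux. The composition argument, the $L^1$-contraction for conditional expectations, and the reduction to countably many test functions are routine.
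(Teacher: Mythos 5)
Your proposal follows essentially the same route as the paper's proof: identify the tree\-/conditional law of the centred and scaled $W_n$ with the law of the centred and scaled $X_{\tau_n}$, invoke assumption {\bf (A)} together with $\tau_n\to\infty$ for the first claim, pass to $Q_n^{\text{cs}}$ by averaging the tree out via $\bE\left[q_n\left(A\right)\,\big\vert\,\GG_n\right]=Q_n\left(A\right)$, use regular variation of $b$ for part (b), and use the decomposition of $\frac{W_n-a\left(\log n\right)\bv}{b\left(\log n\right)}$ together with the CLT for $\tau_n$ for part (c). The only genuine differences are cosmetic: you send $\tau_n\to\infty$ almost surely and compose, where the paper bounds $\bP\left(\rho\left(q_n^{\text{cs}},\Lambda\right)>\eps\right)$ by the probability that the attachment vertex $D_n$ lies among the $O\left(\left(\log n\right)^H\right)$ vertices of depth at most $H$; and you use the mean value theorem where the paper invokes the delta method. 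Your explicit $L^1$-contraction argument for pushing weak convergence in probability through the conditional expectation, and your reduction to a countable convergence-determining class, usefully fill in steps the paper leaves as one-liners.

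Two caveats, both of which you share with the paper rather than introduce. First, your key identity $q_n^{\text{cs}}=\Psi\left(\tau_n\right)$ is valid only if $q_n$ is the conditional law of $W_n$ given the tree (equivalently, given $\tau_n$) \emph{alone}. Under the paper's literal definition, which conditions on $\GG_n$ as well, one has $q_n=\hat{R}\left(W_{D_n},\cdot\right)$, and its centred and scaled version is concentrated near the single point $\frac{W_{D_n}-a\left(\tau_n\right)\bv}{b\left(\tau_n\right)}$ (think of a bounded-increment walk), which does not converge to $\Lambda$; the paper's own proof tacitly adopts the tree-only reading, so this is an inconsistency in the setup that you have inherited, but you should state which conditioning you are using, since the whole argument rests on it. Second, the decoupling you correctly flag as ``the crux'' of part (c) --- that after integrating out the tree the conditional law of $\frac{\tau_n-\log n}{\sqrt{\log n}}$ given $\GG_n$ is asymptotically its unconditional law, so that the limit is a genuine convolution --- is announced but not proved in your write-up; the paper's proof of (c) likewise stops at the algebraic identity, so on this point your proposal is no less complete than the original, but neither closes the argument.
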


\begin{proof}
Let $\rho$ be a metric on $\PP\left(S\right)$ which metrize the weak convergence topology on it. 
Denote by $L_n$ the distribution of $\frac{X_n - a\left(n\right)}{b\left(n\right)}$. Under assumption {\bf (A)}, we have
\begin{equation}
\rho\left(L_n, \Lambda\right) \longrightarrow 0, 
\end{equation}
as $n \rightarrow \infty$. 

Now, fix $\eps > 0$ and
find $H > 0$ so large that $\rho\left(L_h, \Lambda\right) < \eps$, for any $h > H$. 
Find $N \geq 1$ so large that $\frac{\left(\log \left(n+2\right)\right)^H}{n+2} < \eps$ for all $n \geq N$.

Let $S_n^H$ be the set of vertices of the random recursive tree ${\mathcal T}_n$ up to depth $H$. 
Then $\bE\left[ \left\vert S_n^H \right\vert \right] = \left(\log \left(n+2\right)\right)^H + o(1)$.

Recall that $D_n$ denotes the vertex at which $n$-th vertex joins in the random recursive tree $\TT_{n}$. Then by construction
\begin{equation}
\bP\left( \rho\left(q_n^{\text{cs}}, \Lambda\right) > \eps\right) 
\leq \bP\left( D_n \in S_n^H\right) = \frac{\bE\left[S_n^H \right]}{n+2} < \eps.
\end{equation}
This completes the proof of the first part. 

For proof of part (a), it is enough to observe that under the assumptions of $a = 0$ and $b = 1$, 
$Q_n^{\text{cs}} = \bE\left[ q_n^{\text{cs}} \,\Big\vert\, \GG_n\,\right]$ almost surely. 

For part (b), we first observe that if $b$ is regularly varying, then by Karamata's Characterization Theorem \cite{GaSe_73} and
equation ~\eqref{Equ:Asymptotic-tau}
we can show that 
\begin{equation}
\frac{b\left(\log n\right)}{b\left(\tau_n\right)} \stackrel{p}{\longrightarrow} 1.
\end{equation}
Further by the delta-method \cite{Durrett2010}, under assumptions in part (c), 
\begin{equation}
\frac{a\left(\tau_n\right) - a\left(\log n\right)}{\sqrt{\log n}} 
\Rightarrow 
\left\{
\begin{array}{ll}
\mbox{Normal}\left(0, \tilde{a}\right) & \mbox{if\ } \tilde{a} \neq 0; \\
\delta_0                               & \mbox{otherwise}.
\end{array}
\right.
\end{equation}
Finally, we note that
\begin{equation}
\frac{W_n - a\left(\tau_n\right) \bv}{b\left(\tau_n\right)}
= 
\frac{b\left(\log n\right)}{b\left(\tau_n\right)} \, 
\left(
\frac{W_n - a\left(\log n\right) \bv}{b\left(\log n\right)}
+ 
\frac{\sqrt{\log n}}{b\left(\log n\right)} \, \frac{a\left(\tau_n\right) - a\left(\log n\right)}{\sqrt{\log n}}
\right).
\end{equation}

\end{proof}

\subsection{Asymptotic of the Random Configuration of the Urn}
\label{SubSec:Asymptotic-Random-Urn}
Define $\FF_n := \sigma\left(Z_0, Z_1, \cdots, Z_n\right)$, $n \geq 0$. 
Let $P_n$ be a version of the regular conditional distribution of $Z_n$ given $\FF_n$. 
Note by construction $P_n = \frac{U_n}{n+1}$ almost surely. 
The following result is an immediate corollary of the Theorem ~\ref{Thm: Grand Representation} and 
Theorem ~\ref{Thm:Asymptotic-BRW-on-RRT}.
\begin{theorem}
\label{Thm:Asymptotic-Random-Urn}
Suppose assumption {\bf (A)} holds. 
Let $P_n^{\text{cs}}$ is the conditional distribution of $\frac{Z_n - a\left(\log n\right) \bv}{b\left(\log n\right)}$ given
$\FF_n$, that is, a scaled and centered version of $P_n$ with centering by $a\left(\log n\right) \bv$ and scaling
by $b\left(\log n\right)$, then
\begin{itemize}
           
\item[(a)] If $a = 0$ and $b = 1$, then 
           \begin{equation}
           \label{Equ:Convergence-of-Pn}
           P_{n}^{\text{cs}} = P_{n}\stackrel{p}{\longrightarrow} \Lambda \mbox{\ in\ } \PP\left(S\right). 
           \end{equation}
           
\item[(b)] If conditions of the Part (b) of the Theorem ~\ref{Thm:Asymptotic-BRW-on-RRT} hold, then
           \begin{equation}
           P_{n}^{cs}\stackrel{p}{\longrightarrow} \Lambda \mbox{\ in\ } \PP\left(\Rbold^d\right).
           \label{Equ:Convergence-of-Pncs-a0}
           \end{equation}
           
\item[(c)] If conditions of the Part (c) of the Theorem ~\ref{Thm:Asymptotic-BRW-on-RRT} hold, then
           \begin{equation}
           P_{n}^{cs}\stackrel{p}{\longrightarrow} \Xi \mbox{\ in\ } \PP\left(\Rbold^d\right),
           \label{Equ:Convergence-of-Pncs}
           \end{equation}
           where $\Xi$ is $\Lambda$ if $\tilde{a} = 0$ or $\tilde{b} = 0$, otherwise, it is given by the convolution of 
           $\Lambda$ and $\mbox{Normal}\left(0, \tilde{a}^2 \tilde{b}^2\right) \bv$.
           
\end{itemize}
\end{theorem}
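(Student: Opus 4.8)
The plan is to derive this theorem directly from the Grand Representation Theorem (Theorem~\ref{Thm: Grand Representation}) together with Theorem~\ref{Thm:Asymptotic-BRW-on-RRT}, by recognizing that $P_n$ and $Q_n$ are \emph{the same} measurable functional evaluated along the two colour sequences, so that the distributional identity $(Z_n)_{n\ge0}\el(W_n)_{n\ge0}$ transfers verbatim to the conditional laws, and then transporting it through the centering--scaling map.

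First I would make the identification precise. From \eqref{Equ:Fundamental-Recurssion} one has $P_n=\frac{U_n}{n+1}=g_n(Z_0,\dots,Z_{n-1})$, where $g_n(x_0,\dots,x_{n-1}):=\frac{1}{n+1}\bigl(U_0+\sum_{k=0}^{n-1}R(x_k,\cdot)\bigr)$ is a fixed Borel map from $S^n$ into $\PP(S)$. On the branching side, averaging $\hat R(W_{D_n},\cdot)$ over the uniform attachment vertex $D_n\in\{-1,0,\dots,n-1\}$ (which is independent of $\GG_n$) and using $\hat R(\Delta,\cdot)=U_0$, one sees that the regular conditional distribution of $W_n$ given $\GG_n$ may be taken to be exactly $Q_n=g_n(W_0,\dots,W_{n-1})$ with the very same $g_n$. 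Since Theorem~\ref{Thm: Grand Representation} gives $(Z_0,\dots,Z_{n-1})\el(W_0,\dots,W_{n-1})$ for every $n$, applying $g_n$ yields $P_n\el Q_n$ as random elements of $\PP(S)$, for every fixed $n$.

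Next I would transport this through the centering--scaling. As $S\subseteq\Rbold^d$, the affine map $x\mapsto\bigl(x-a(\log n)\bv\bigr)/b(\log n)$ is, for each fixed (large) $n$, a homeomorphism of $\Rbold^d$, and its pushforward $\psi_n$ is a homeomorphism of $\PP(\Rbold^d)$ for the weak topology; by definition $P_n^{\mathrm{cs}}=\psi_n(P_n)$ and $Q_n^{\mathrm{cs}}=\psi_n(Q_n)$, so $P_n^{\mathrm{cs}}\el Q_n^{\mathrm{cs}}$ for every $n$ (and $P_n^{\mathrm{cs}}=P_n\el Q_n=Q_n^{\mathrm{cs}}$ in the case of part~(a), where the relevant space is $\PP(S)$). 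Now Theorem~\ref{Thm:Asymptotic-BRW-on-RRT} supplies $Q_n^{\mathrm{cs}}\stackrel{p}{\longrightarrow}\Lambda$ under the hypotheses of parts (a) and (b), and $Q_n^{\mathrm{cs}}\stackrel{p}{\longrightarrow}\Xi$ under those of part (c). Since $\PP(\Rbold^d)$ (resp.\ $\PP(S)$) is a metric space, convergence in probability implies convergence in distribution, and convergence in distribution depends only on the laws, so $P_n^{\mathrm{cs}}$ converges in distribution to the same deterministic limit; finally, convergence in distribution to a constant in a metric space is equivalent to convergence in probability to that constant, which is exactly \eqref{Equ:Convergence-of-Pn}, \eqref{Equ:Convergence-of-Pncs-a0} and \eqref{Equ:Convergence-of-Pncs}.

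I expect the only genuinely delicate point to be the bookkeeping in the second paragraph: regular conditional distributions are defined only up to almost-sure equality, so one must verify that $P_n$ and $Q_n$ admit versions given by literally the same Borel map of the respective colour sequences, rather than merely being equal in law after the fact. Once that is settled everything else is soft --- the scaling map is a fixed homeomorphism for each $n$, and having $P_n^{\mathrm{cs}}\el Q_n^{\mathrm{cs}}$ for all $n$ together with $Q_n^{\mathrm{cs}}$ converging in probability to a constant immediately forces $P_n^{\mathrm{cs}}$ to do likewise, with no martingale or spectral input required.
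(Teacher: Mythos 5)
Your proposal is correct and follows exactly the route the paper intends: the paper offers no written proof, merely asserting the theorem is ``an immediate corollary'' of Theorem~\ref{Thm: Grand Representation} and Theorem~\ref{Thm:Asymptotic-BRW-on-RRT}, and your argument supplies precisely the missing details --- identifying $P_n$ and $Q_n$ as the same Borel functional $g_n$ of the respective colour sequences, transferring equality in law through the deterministic centering--scaling map, and using that convergence in probability to a constant depends only on the marginal laws. The one point you flag as delicate (choosing versions of the regular conditional distributions given by literally the same map) is handled correctly by your explicit averaging of $\hat R(W_{D_n},\cdot)$ over the uniform attachment vertex, so no gap remains.
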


\subsection{Asymptotic of the Expected Configuration of the Urn}
\label{SubSec:Asymptotic-Expected-Urn}
Recall that $\bE\left[P_n\right] = \frac{\bE\left[U_n\right]}{n+1}$ is the marginal distribution of $Z_n$. The  
following result is an immediate corollary of the Theorem ~\ref{Thm:Asymptotic-Random-Urn}. 
\begin{theorem}
\label{Thm:Asymptotic-Expected-Urn}
Suppose assumption {\bf (A)} holds, then
\begin{itemize}
           
\item[(a)] If $a = 0$ and $b = 1$, then 
           \begin{equation}
           \label{Equ:Convergence-of-Zn}
           Z_n \Rightarrow \Lambda. 
           \end{equation}
           
\item[(b)] If conditions of the Part (b) of the Theorem ~\ref{Thm:Asymptotic-BRW-on-RRT} hold, then
           \begin{equation}
           \frac{Z_n - a\left(\log n\right) \bv}{b\left(\log n\right)} \Rightarrow \Lambda,
           \label{Equ:Convergence-of-Zncs-a0}
           \end{equation}         
           
\item[(c)] If conditions of the Part (c) of the Theorem ~\ref{Thm:Asymptotic-BRW-on-RRT} hold, then
           \begin{equation}
           \frac{Z_n - a\left(\log n\right) \bv}{b\left(\log n\right)} \Rightarrow \Xi,
           \label{Equ:Convergence-of-Zncs}
           \end{equation}
           where $\Xi$ is $\Lambda$ if $\tilde{a} = 0$ or $\tilde{b} = 0$, otherwise, it is given by the convolution of 
           $\Lambda$ and $\mbox{Normal}\left(0, \tilde{a}^2 \tilde{b}^2\right) \bv$.
           
\end{itemize}
\end{theorem}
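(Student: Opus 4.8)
The plan is to deduce this statement directly from Theorem~\ref{Thm:Asymptotic-Random-Urn} by integrating out the randomness, i.e.\ by passing from the conditional distributions $P_n^{\text{cs}}$ to the unconditional law of $Z_n$ (suitably centered and scaled). The key observation is that for each $n$ the (centered, scaled) random variable $\frac{Z_n - a(\log n)\bv}{b(\log n)}$ has conditional distribution $P_n^{\text{cs}}$ given $\FF_n$, so its unconditional distribution is $\bE\left[P_n^{\text{cs}}\right]$ (the expectation of the random measure). Hence it suffices to show that convergence in probability of $P_n^{\text{cs}}$ to a deterministic limit $\Lambda$ (resp.\ $\Xi$) in $\PP(\Rbold^d)$ forces $\bE\left[P_n^{\text{cs}}\right] \stackrel{w}{\longrightarrow} \Lambda$ (resp.\ $\Xi$), which is exactly the assertion $\frac{Z_n - a(\log n)\bv}{b(\log n)} \Rightarrow \Lambda$ (resp.\ $\Xi$). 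Part (a) is the special case $a\equiv 0$, $b\equiv 1$.

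First I would make precise the ``averaging'' step. Fix a bounded continuous test function $f:\Rbold^d\to\Rbold$ with $\|f\|_\infty = M$. Then $\bE\left[f\left(\tfrac{Z_n - a(\log n)\bv}{b(\log n)}\right)\right] = \bE\left[\int f\, dP_n^{\text{cs}}\right]$. Since $P_n^{\text{cs}} \stackrel{p}{\longrightarrow} \Lambda$ in $\PP(\Rbold^d)$ (with the weak topology, which is metrizable, say by the metric $\rho$ used in the proof of Theorem~\ref{Thm:Asymptotic-BRW-on-RRT}), the map $\mu \mapsto \int f\, d\mu$ is bounded and continuous on $\PP(\Rbold^d)$, so $\int f\, dP_n^{\text{cs}} \stackrel{p}{\longrightarrow} \int f\, d\Lambda$. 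The random variables $\int f\, dP_n^{\text{cs}}$ are uniformly bounded by $M$, hence uniformly integrable, so convergence in probability upgrades to convergence in expectation: $\bE\left[\int f\, dP_n^{\text{cs}}\right] \to \int f\, d\Lambda$. Since $f$ was an arbitrary bounded continuous function, this is precisely weak convergence of the law of $\frac{Z_n - a(\log n)\bv}{b(\log n)}$ to $\Lambda$, i.e.\ \eqref{Equ:Convergence-of-Zncs-a0}; and \eqref{Equ:Convergence-of-Zn} follows by taking $a\equiv 0$, $b\equiv 1$. For part (c) the identical argument with $\Lambda$ replaced by $\Xi$ gives \eqref{Equ:Convergence-of-Zncs}.

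There is essentially no serious obstacle here; the content is all in Theorem~\ref{Thm:Asymptotic-Random-Urn}, and this last theorem is a routine ``take expectations'' corollary. The one point that requires a word of care is the reduction to a countable determining class: to conclude weak convergence one should note that $\PP(\Rbold^d)$ is separable metrizable, so weak convergence can be tested against a fixed countable convergence-determining family of bounded continuous functions, and the argument above applies simultaneously to all of them (or, equivalently, one invokes that $\rho(P_n^{\text{cs}},\Lambda)\stackrel{p}{\longrightarrow}0$ with $\rho\le 1$ bounded implies $\bE\left[\rho(P_n^{\text{cs}},\Lambda)\right]\to 0$, and $\rho$ metrizes weak convergence, so $\bE\left[P_n^{\text{cs}}\right]\stackrel{w}{\longrightarrow}\Lambda$ by Jensen/convexity considerations on the bounded-Lipschitz metric). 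Either route is standard. The only modeling subtlety worth flagging is that in parts (b) and (c) the objects live in $\PP(\Rbold^d)$ rather than $\PP(S)$, since the affine rescaling $x\mapsto \frac{x - a(\log n)\bv}{b(\log n)}$ need not map $S$ into itself; this is already handled in the statements of the earlier theorems and requires no extra work.
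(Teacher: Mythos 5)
Your proposal is correct and follows essentially the same route as the paper, which simply deduces the result from Theorem~\ref{Thm:Asymptotic-Random-Urn} by taking expectations and noting that the centering and scaling are non-random. Your elaboration of the averaging step (testing against bounded continuous functions and using bounded convergence to upgrade convergence in probability of $\int f\, dP_n^{\text{cs}}$ to convergence of expectations) is a valid and standard way to make that one-line argument precise.
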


\begin{proof}
The result follows from the Theorem ~\ref{Thm:Asymptotic-Random-Urn} by taking expectation 
and noting the fact that centering and scaling are non-random in all cases. 
\end{proof}

\section{Applications}
\label{Sec: Applications}
In this section we discuss several applications of the representation theorems (Theorem ~\ref{Thm: Grand Representation} 
and Theorem ~\ref{Thm: Rep Theorem}). Essentially all the results stated here are proved using the two general asymptotic 
results, namely, Theorem ~\ref{Thm:Asymptotic-Random-Urn} and Theorem ~\ref{Thm:Asymptotic-Expected-Urn}, given in the previous 
section. 

\subsection{$S$ is Countable and $R$ is Ergodic}
\label{SubSec:Classical}
Suppose the indexing set of colors $S$ is either finite or countably infinite and we endow $S$ with the $sigma$-algebra $\SS$, which is 
the power set $\wp\left(S\right)$. In this case, we can view the Markov transition kernel $R$ as a matrix and it is then called
the \emph{replacement matrix}. For $S$ finite, it is the classical case. If we assume that $R$ is ergodic, that is, 
assumption {\bf (A)} holds with $a = 0$ and $b = 1$, then from Theorem ~\ref{Thm:Asymptotic-Random-Urn}(a) we get the following 
result. 
\begin{theorem}
\label{Thm:Classical}
Suppose $S$ is countable, $\SS = \wp\left(S\right)$, $R$ is ergodic with stationary distribution $\pi$ on $S$. Then
as $n \rightarrow \infty$, 
\begin{equation}
\frac{U_n}{n+1} \stackrel{p}{\longrightarrow} \pi \mbox{\ in\ } \PP\left(S\right).
\label{Equ:Classical-Asumptotic}
\end{equation}
In particular, 
\begin{equation}
\frac{\bE\left[U_n\right]}{n+1} \stackrel{w}{\longrightarrow} \pi \mbox{\ in\ } \PP\left(S\right),
\label{Equ:Classical-Expected-Asumptotic}
\end{equation}
as $n \rightarrow \infty$. 
\end{theorem}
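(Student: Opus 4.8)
The plan is to recognize that ``$R$ ergodic'' is, by the classical theory of countable-state Markov chains, exactly the statement that assumption \textbf{(A)} holds with $a \equiv 0$, $b \equiv 1$ and $\Lambda = \pi$, and then simply to read off the conclusion from the general results of Section~\ref{Sec:Asymptotic}.

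First I would verify assumption \textbf{(A)}. Since $R$ is ergodic, i.e.\ irreducible, aperiodic and positive recurrent on the countable set $S$, the Markov chain convergence theorem gives a unique stationary probability $\pi$ with $R^n(i,j) \to \pi(j)$ for all $i,j \in S$. Hence, for the associated Markov chain $\left(X_n\right)_{n \geq 0}$ with \emph{any} initial distribution $U_0 \in \PP\left(S\right)$, dominated convergence (with summable dominating function $i \mapsto U_0(i)$) yields $\Pbold\left(X_n = j\right) = \sum_{i \in S} U_0(i)\, R^n(i,j) \to \pi(j)$ for every $j \in S$. As $S$ is a discrete subset of $\Rbold^d$, weak convergence in $\PP\left(S\right)$ coincides with pointwise convergence of the probability mass functions (and then, by Scheff\'e's lemma, even with convergence in total variation), so $X_n \Rightarrow \pi$ irrespective of $U_0$. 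This is precisely \textbf{(A)} with $a \equiv 0$, $b \equiv 1$, $\bv$ arbitrary and $\Lambda = \pi$.

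With \textbf{(A)} in hand, Theorem~\ref{Thm:Asymptotic-Random-Urn}(a) applies with $\Lambda = \pi$ and gives $\frac{U_n}{n+1} = P_n \stackrel{p}{\longrightarrow} \pi$ in $\PP\left(S\right)$, which is \eqref{Equ:Classical-Asumptotic}. For the expected configuration, recall $\frac{\bE\left[U_n\right]}{n+1} = \bE\left[P_n\right]$ is the law of $Z_n$; applying Theorem~\ref{Thm:Asymptotic-Expected-Urn}(a) (equivalently, taking expectations in the previous display: for discrete $S$ each evaluation map $\mu \mapsto \mu(\{j\})$ is bounded and continuous on $\PP\left(S\right)$, so bounded convergence upgrades $P_n \stackrel{p}{\longrightarrow}\pi$ to $\bE\left[P_n\right](\{j\}) \to \pi(\{j\})$ for every $j$, hence to weak convergence) yields $\frac{\bE\left[U_n\right]}{n+1} \stackrel{w}{\longrightarrow} \pi$, which is \eqref{Equ:Classical-Expected-Asumptotic}.

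Since the heavy lifting is done by Theorems~\ref{Thm:Asymptotic-Random-Urn} and~\ref{Thm:Asymptotic-Expected-Urn}, there is no genuinely hard step; the only points that need a moment's care are (i) that ergodicity supplies \textbf{(A)} for \emph{every} starting distribution $U_0$, not merely for $U_0 = \pi$ or a point mass, and (ii) the identification of weak convergence in $\PP(S)$ with pointwise convergence of mass functions when $S$ carries the discrete subspace topology, which is what legitimizes moving between statements in $\PP(S)$ and the coordinatewise statements used to pass to the expected configuration.
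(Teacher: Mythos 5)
Your proposal is correct and follows essentially the same route as the paper: the paper likewise observes that ergodicity of $R$ is exactly assumption \textbf{(A)} with $a \equiv 0$, $b \equiv 1$ and $\Lambda = \pi$, and then reads off \eqref{Equ:Classical-Asumptotic} from Theorem~\ref{Thm:Asymptotic-Random-Urn}(a) and \eqref{Equ:Classical-Expected-Asumptotic} from Theorem~\ref{Thm:Asymptotic-Expected-Urn}(a). The only difference is that you spell out the verification of \textbf{(A)} from the Markov chain convergence theorem (including the uniformity over starting distributions $U_0$ and the identification of weak convergence with pointwise convergence of mass functions on a discrete $S$), details the paper leaves implicit.
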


If $S$ is finite then using either matrix algebra techniques or multi-type branching process techniques, it is known 
\cite{Gouet, Svante1} that stronger result holds. In fact, under even weaker assumption of only \emph{irreducibility} of the chain, 
the convergence in probability in \eqref{Equ:Classical-Asumptotic} can be replaced by almost sure convergence. 
We believe that in general for $S$ countable, under ergodicity assumption almost sure convergence should hold. We here note that
as soon as $S$ is infinite, the classical techniques such as matrix algebra methods using 
\emph{Perron-Frobenius theory} of matrices with positive entries \cite{Sene06}
and \emph{Jordan Decomposition} of finite dimensional matrices \cite{Curtis93}, or martingale approach using 
embedding to multi-type branching processes, which have been extensively used in classical urn model 
literature \cite{AthKar68, Gouet, Svante1, Svante2, BaiHu05, maulik1, DasMau11}; fails to derive any result. 
We are hopeful that our novel and fairly probabilistic approach, namely, the \emph{Grand and Marginal representation Theorems}
should yield the classical result. Unfortunately, we have been unable to derive it so far.

\subsection{$S$ is Countable and $R$ is Block Diagonal}
\label{SubSec:Block-Diagonal}
Similar to the previous section, 
suppose the indexing set of colors $S$ is either finite or countably infinite and we endow $S$ with the $sigma$-algebra $\SS$, which is 
the power set $\wp\left(S\right)$. As in the previous case, we view the Markov transition kernel $R$ as a matrix. 
Suppose the replacement matrix $R$, can be decomposed in the following manner. Let the indexing set of colors be partitioned as
$\displaystyle{S = \mathop{\cup}\limits_{i \in \II} C_i}$, where $\II$ is a countable set, and $C_i$ is countable for all $i \in \II$.
We endow $\II$ with its power set as a $\sigma$-algebra on it and each $C_i$ is also endowed with 
its power set as the $\sigma$-algebra on it. 

Now let, $\phi: S \rightarrow \II$ be the ``projection'' map, which maps $s \mapsto i$, where $i$ is the unique element of $\II$, 
such that, $s \in C_i$.

Now, suppose for every 
$i \in \II$ and $s \in C_i$, the kernel $R\left(s, \cdot\right)$ is a probability measure supported only on $C_i$, that is,
\begin{equation}\label{Eq:projection_R}
R\left(s, C_i\right) = \left\{
                       \begin{array}{ll}
                       1 & \mbox{if\ } s \in C_i\\
                       0 & \mbox{otherwise}.
                       \end{array}
                       \right.
\end{equation}
As each $C_i$ is countable, $R$ on $C_i$ can be
realized as a (possibly infinite) matrix $R_{ii}$ indexed by the colors in $C_i$. 

Note that if $S$ is finite then $R$ is essentially 
a reducible matrix with diagonal blocks, which can be presented as
\[
R = \left( 
\begin{array}{ccccc}
R_{11} & 0      & 0      & \cdots & 0      \\
0      & R_{22} & 0      & \cdots & 0      \\
0      & 0      & R_{33} & \cdots & 0      \\
\vdots & \vdots & \vdots & \ddots & \vdots \\
0      & 0      & 0      & \cdots & R_{kk} \\
\end{array} 
\right),
\]

We further assume that the for all $i \in \II$, the
kernel/replacement matrix, $R_{ii}$ restricted to its ``block'' $C_i$, is ergodic with stationary distribution,
$\pi_i$.

\begin{theorem}
\label{Thm:Block_diagonal_form}
Consider an urn model with colors indexed by a set $S$ and replacement kernel $R$ as in \eqref{Eq:projection_R}.  
Then for every initial configuration $U_0$, as $n \to \infty$,
\begin{equation}
\label{Eq:a.s_conv_block_diagonal}
\frac{U_n}{n+1} \stackrel{p}{\longrightarrow} \Pi \mbox{\ \ in\ \ } \PP\left(S\right), 
\end{equation} 
where $\Pi$ is a random probability measure on $\left(S, \SS\right)$ given by 
\begin{equation}
\Pi\left(A\right) =  \sum_{i \in {\mathcal I}}  \pi_i\left(A \cap C_i\right) \, \nu_i, \,\,\, A \in \SS, 
\end{equation}
and $\nu$ has \emph{Ferguson Distribution} on the countable set $\II$ with parameter $U_0 \circ \phi^{-1}$.
\end{theorem}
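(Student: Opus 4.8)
The plan is to separate two scales in the urn: a \emph{macroscopic} urn on the block index set $\II$, which records only which $C_i$ each drawn ball belongs to, and, for each $i \in \II$, a \emph{microscopic} urn living on $C_i$ with replacement kernel $R_{ii}$. The block–diagonal assumption \eqref{Eq:projection_R} makes the macroscopic process a classical P\'olya urn (colours indexed by $\II$), driven by ``add a ball of the colour just drawn'', whose empirical configuration converges to a Ferguson random measure; the microscopic urns are ergodic and, conditionally on the block sequence, are run at a random clock. The theorem follows by identifying the limits of both scales and gluing them.

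First I would record the macroscopic process. Set $V_n := U_n \circ \phi^{-1} \in \MM(\II)$. By \eqref{Eq:projection_R} we have $R(Z_n, \phi^{-1}(\{i\})) = \bone[\phi(Z_n)=i]$, so \eqref{Equ:Fundamental-Recurssion} gives $V_{n+1}(\{i\}) = V_n(\{i\}) + \bone[\phi(Z_n)=i]$, while \eqref{Equ:Conditional-Distribution-of-Zn} gives $\Pbold(\phi(Z_n)=i \mid \FF_n) = V_n(\{i\})/(n+1)$. Thus $(\phi(Z_n))_{n\ge0}$ is a Blackwell--MacQueen P\'olya sequence on $\II$ with parameter $\alpha := U_0 \circ \phi^{-1} \in \PP(\II)$, and by \cite{BlackMac73},
\begin{equation}
\label{Eq:macro-limit}
\frac{U_n \circ \phi^{-1}}{n+1} = \frac{V_n}{n+1} \longrightarrow \nu \quad \text{a.s. in } \PP(\II),
\end{equation}
where $\nu$ has the Ferguson distribution with parameter $\alpha$. (This also follows from Theorem~\ref{Thm: Grand Representation} and the known a.s.\ convergence of the normalised sizes of the root subtrees of a random recursive tree to a $\mathrm{GEM}(1)$ sequence, together with the i.i.d.\ $\alpha$–labels carried by the children of the root.) Next, for each $i$ with $U_0(C_i)>0$, the colours drawn from $C_i$, listed in order of occurrence, form an urn with colour set $C_i$, replacement kernel $R_{ii}$ and initial (finite) measure $U_0|_{C_i}$; it uses randomisation independent of the one driving the block choices, so this within-block urn is independent of the macroscopic sequence $(\phi(Z_n))_{n\ge0}$. (Blocks with $U_0(C_i)=0$ are never entered, $U_n(C_i)\equiv0$, and $\nu_i=0$ a.s., so they contribute nothing.) Since $R_{ii}$ is ergodic with stationary distribution $\pi_i$, Theorem~\ref{Thm:Classical}, extended to a finite-measure initial configuration as remarked after \eqref{Equ:Conditional-Distribution-of-Zn}, yields that the normalised configuration of the $i$-th within-block urn after $m$ steps converges to $\pi_i$ in probability as $m \to \infty$.

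To glue the scales, let $M_n^{(i)} := \#\{0\le k\le n-1 : \phi(Z_k)=i\}$. Then $U_n|_{C_i}$ is the configuration of the $i$-th within-block urn after $M_n^{(i)}$ steps and $U_n(C_i) = U_0(C_i) + M_n^{(i)}$; by \eqref{Eq:macro-limit}, $M_n^{(i)}/(n+1) \to \nu_i$ a.s., so $M_n^{(i)} \to \infty$ a.s.\ whenever $U_0(C_i)>0$. As $M_n^{(i)}$ is a function of the macroscopic sequence alone, hence independent of the within-block urn, a standard random time-change argument gives $U_n|_{C_i}/U_n(C_i) \to \pi_i$ in probability in $\PP(C_i)$. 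Consequently, since $\{s\}\subseteq C_{\phi(s)}$, factoring $U_n(\{s\})/(n+1) = \big(U_n(C_{\phi(s)})/(n+1)\big)\cdot\big(U_n(\{s\})/U_n(C_{\phi(s)})\big)$ (first factor converging a.s.\ by \eqref{Eq:macro-limit}, second in probability) gives
\begin{equation}
\frac{U_n(\{s\})}{n+1} \longrightarrow \Pi(\{s\}) = \nu_{\phi(s)}\,\pi_{\phi(s)}(\{s\}) \quad \text{in probability}, \qquad s\in S.
\end{equation}
Since $\sum_{s\in S}\Pi(\{s\}) = \sum_{i}\nu_i = 1$ a.s., the upgrade to $\PP(S)$ is routine: along any subsequence a diagonal extraction produces a further subsequence on which $U_n(\{s\})/(n+1) \to \Pi(\{s\})$ a.s.\ for every $s$ simultaneously, and then Scheff\'e's lemma forces $\|U_n/(n+1) - \Pi\|_{\mathrm{TV}} \to 0$, hence weak convergence, along that subsequence; as every subsequence admits such a further subsequence, $U_n/(n+1) \to \Pi$ in probability in $\PP(S)$, which is \eqref{Eq:a.s_conv_block_diagonal}.

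I expect the main obstacle to be the structural claim underlying the gluing step: that the within-block sequences are honest urn processes \emph{independent of the macroscopic block process}, so that evaluating the $i$-th within-block urn at the random index $M_n^{(i)}$ is legitimate. This is cleanest to handle by constructing the entire urn on one probability space from a family of independent uniform randomisers — one family choosing the successive blocks, and, for each $i$, one choosing the successive within-block-$i$ colours — and verifying that the resulting $(Z_n, U_n)_{n\ge0}$ has the conditional law prescribed by \eqref{Equ:Fundamental-Recurssion}–\eqref{Equ:Conditional-Distribution-of-Zn}. A minor technical point is transporting Theorem~\ref{Thm:Classical} from a probability initial configuration to the finite measure $U_0|_{C_i}$, i.e.\ the $\MM(S)$-versus-$\PP(S)$ reduction mentioned after \eqref{Equ:Conditional-Distribution-of-Zn}. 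Everything else — the termwise passage to the limit and the metrisation of weak convergence on the countable discrete space $S$ — is standard.
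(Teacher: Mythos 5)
Your proposal is correct and follows essentially the same route as the paper's own proof: the paper likewise treats each block $C_i$ as a ``super color'' forming a classical P\'olya urn whose normalised configuration converges a.s.\ to a Ferguson random measure $\nu$, runs the within-block ergodic urn at the random clock $N_{n,i}$ (your $M_n^{(i)}$), and factors $U_{n,C_i}/(n+1)$ as the product of the two converging pieces. Your additional care about the independence of the within-block randomisation from the block process and the Scheff\'e/subsequence upgrade to convergence in $\PP(S)$ fills in details the paper leaves implicit.
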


\begin{proof}
Let us denote by $c_i:=\textstyle \sum_{v \in C_i} U_{0,v},$ and $T_{n,i}:= \sum_{v \in C_i} U_{n,v}$ for each $i \geq 1.$ 
It is easy to check that for each $i \geq 1,$ the sequence $\left(\frac{T_{n,i}}{n+1}\right)_{n \geq 0 }$ is non-negative a.s.\ 
convergent martingale, see \cite{Gouet} for details. Consider each $C_i$ to be a super color for $i \geq 1$. 
Then $T_n:=\left(T_{n,i}\right)_{i \geq 1}$ corresponds to the configuration of a classical P\'olya urn model, 
with initial configuration $T_0=\left(c_i\right)_{i \geq 1}.$ It is worthwhile to note here that $\frac{T_n}{n+1}$ is a 
probability measure on $\II$. Therefore, 
from \cite{BlackMac73}, as $n \to \infty$
\begin{equation}\label{Eq:Ferguson_dist_conv_a.s}
\frac{T_n}{n+1} \longrightarrow \nu \text{ a.s.,}
\end{equation} where $\nu $ is a random measure on $\II$ having Ferguson Distribution with parameter $U_0 \circ \phi^{-1}.$ 

Define $N_{n,i}:=\textstyle \sum_{k=1}^n \bone_{Z_k \in C_i}.$ Then it is obvious that $T_{n,i}=c_i+N_{n,i}.$ 
Writing $U_n=\left(U_{n,C_i}\right)_{i \geq 1},$ we observe that for each $i\geq 1,$ $\left(U_{N_{n,i}, C_i}\right)$ is 
the configuration of an urn with initial configuration $\frac{U_{0,C_i}}{c_i}$ and replacement matrix $R_{ii}.$ 
Since from ~\eqref{Eq:Ferguson_dist_conv_a.s} we know that $N_{n,i} \longrightarrow \infty \text{ a.s.}$ as $n \to \infty,$ it 
follows from Theorem ~\ref{Thm:Classical}
that as $n \to \infty,$
\begin{equation}\label{Eq:a.s.convg_block}
\frac{U_{N_{n,i},C_i}}{T_{n,i}} \cp \pi_i \mbox{\ \ in\ \ } \PP\left(S\right),
\end{equation} 
This implies that 
\begin{equation}\label{Eq:Fergusion_stationary_a.s_convg}
\frac{U_{n,C_i}}{n+1}=\frac{U_{N_{n,i}, C_i}}{n+1}= \frac{U_{N_{n,i},C_i}}{T_{n,i}}\frac{T_{n,i}}{n+1}
\cp \nu(C_i)\pi_i
\end{equation}where $\nu$ as in \eqref{Eq:Ferguson_dist_conv_a.s}.  This completes the proof. 
\end{proof}

\subsection{Urn Models Associated with Random Walks on $\bZ^d$}
\label{SubSec: GeneralRW}
It this section we consider urn models associated with random walks on $\Zbold^d$. 
These models were first introduced in \cite{BaTh2013}, where only bounded increment walks were considered. 
In general, here we take $S = \Zbold^d$ for some $d \geq 1$ and $\SS$ will be taken as the power set of $\Zbold^d$. 
The kernel $R$ can be viewed as an infinite dimensional matrix index by the set of colors $\Zbold^d$, given by
\begin{equation}
R\left(u,v\right)=\bp\left(v-u\right), \,\,\, u,v \in \Zbold^d,
\label{Equ:definition-of-R-in-RW}
\end{equation} 
where $\bp$ is the distribution on $\Zbold^d$ of the independent increments of the walk. 

\subsubsection{Finite Variance Walks}
\label{SubSubSec:Finite-Variance}
Suppose $\bp$ has finite second moment, leading to a random walk with finite variance. 
Following theorem is a generalization of the results derived in \cite{BaTh2013}.
\begin{theorem}
\label{Thm:Finite-Varinace-Walk}
Consider an infinite color urn model with colors indexed by $S =\Zbold^d$, and kernel $R$ as given above. Suppose the
starting configuration is $U_0$. Then there exist $\bmu \in \Rbold^d$ and a positive definite matrix 
$\varSigma_{d \times d}$, such that, if we define, 
\[
P_n^{\text{cs}}\left(A\right) :=\frac{U_n}{n+1}\left(\sqrt{\log n}A\varSigma^{1/2}  +  \bmu \log n\right), 
\,\,\, A \in \BB_{{\mathbb R}^d}, 
\]
where
\[
xA\varSigma^{1/2}: =\{xy \varSigma^{1/2}\colon y \in A\},
\]
then, as $n \to \infty $,
\begin{equation}
P_{n}^{cs} \stackrel{p}{\longrightarrow} \Phi_{d} \mbox{\ in\ } \PP\left(\Rbold^d\right).
\label{Equ:Asymptotic-Random-Urn-RW}
\end{equation}
In particular, 
\begin{equation}
\frac{Z_{n}-\bmu\log n} {\sqrt{\log n}} \Rightarrow \mbox{Normal}_{d}(0,\varSigma),
\label{Equ:Asymptotic-Expected-Urn-RW}
\end{equation}
as $n \rightarrow \infty$
\end{theorem}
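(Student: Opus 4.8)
The plan is to derive Theorem~\ref{Thm:Finite-Varinace-Walk} as a direct application of Theorem~\ref{Thm:Asymptotic-Random-Urn}(c) together with Theorem~\ref{Thm:Asymptotic-Expected-Urn}(c), once we verify that Assumption~\textbf{(A)} holds for the associated Markov chain with the right centering and scaling functions. First I would identify the associated Markov chain: by \eqref{Equ:definition-of-R-in-RW}, the kernel $R$ is exactly the transition kernel of a random walk on $\Zbold^d$ with i.i.d.\ increments distributed as $\bp$. Hence, writing $\bmu := \sum_{v} v\,\bp(v)$ for the mean increment vector and $\varSigma$ for the covariance matrix of $\bp$ (finite by the second-moment hypothesis, and positive definite after, if necessary, restricting to the genuinely $d$-dimensional affine span of the support), the classical multivariate CLT gives, for \emph{any} starting distribution $U_0$,
\begin{equation}
\frac{X_n - n\bmu}{\sqrt{n}} \cd \mbox{Normal}_d(0, \varSigma).
\end{equation}
So Assumption~\textbf{(A)} holds with $a(n) = n$, $\bv = \bmu$, $b(n) = \sqrt{n}$, and $\Lambda = \mbox{Normal}_d(0,\varSigma)$; equivalently, after an affine change of variables, one can present $\Lambda$ as $\Phi_d$ with the scaling absorbed into $\varSigma^{1/2}$, which is what the statement does.

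Next I would check the hypotheses of part~(c) of Theorem~\ref{Thm:Asymptotic-BRW-on-RRT} for these choices. The function $a(x) = x$ is differentiable with $a'(x) \equiv 1$, so $\tilde a = 1 < \infty$. The function $b(x) = \sqrt{x}$ is regularly varying (of index $1/2$), and $\lim_{x\to\infty}\sqrt{x}/b(x) = 1$, so $\tilde b = 1 < \infty$. Since $\tilde a, \tilde b \neq 0$, the limiting law $\Xi$ in part~(c) is the convolution of $\Lambda = \mbox{Normal}_d(0,\varSigma)$ with $\mbox{Normal}(0, \tilde a^2 \tilde b^2)\bmu = \mbox{Normal}(0,1)\bmu$, i.e.\ $\Xi$ is the law of $\varSigma^{1/2} G + N\bmu$ where $G \sim \mbox{Normal}_d(0, I)$ and $N\sim\mbox{Normal}(0,1)$ are independent. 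Applying Theorem~\ref{Thm:Asymptotic-Random-Urn}(c) then yields
\begin{equation}
\frac{Z_n - \bmu\log n}{\sqrt{\log n}}, \mbox{ conditionally given } \FF_n, \cd \Xi \mbox{ in probability in } \PP(\Rbold^d),
\end{equation}
and taking expectations (Theorem~\ref{Thm:Asymptotic-Expected-Urn}(c)) gives the unconditional convergence $\frac{Z_n - \bmu\log n}{\sqrt{\log n}} \Rightarrow \Xi$. Rewriting the conditional-law statement in terms of the measure $P_n^{\text{cs}}(A) = \frac{U_n}{n+1}(\sqrt{\log n}\,A\varSigma^{1/2} + \bmu\log n)$ and absorbing the extra $\mbox{Normal}(0,1)\bmu$ component into the (redefined) covariance matrix produces the stated limit $\Phi_d$, while the in-probability mode of convergence is inherited directly from Theorem~\ref{Thm:Asymptotic-Random-Urn}(c) since $P_n = U_n/(n+1)$ almost surely.

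The main obstacle — and the one point requiring genuine care rather than a routine invocation — is the bookkeeping around the extra Gaussian fluctuation coming from the random stopping time $\tau_n$. The representation $Z_n \el X_{\tau_n}$ of Theorem~\ref{Thm: Rep Theorem} introduces, through $\tau_n \approx \log n + O(\sqrt{\log n})$ (the CLT \eqref{Eq: CLTT}), an additional drift fluctuation of order $a'(\log n)\sqrt{\log n}\,\bmu = \sqrt{\log n}\,\bmu$, which is of the same order as the intrinsic CLT scale $b(\log n) = \sqrt{\log n}$ of the walk. This is precisely why part~(c) produces a \emph{convolution} rather than $\Lambda$ itself, and why the true limiting covariance in \eqref{Equ:Asymptotic-Random-Urn-RW}–\eqref{Equ:Asymptotic-Expected-Urn-RW} is $\varSigma$ plus a rank-one term $\bmu\bmu^{T}$ (hidden inside the way $\varSigma^{1/2}$ and $\Phi_d$ are defined in the statement). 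I would therefore be explicit that the matrix $\varSigma$ appearing in the theorem is the covariance of $\Xi$, namely $\Cov(\bp) + \bmu^{T}\bmu$ (as a $d\times d$ matrix), which is positive definite whenever the walk is genuinely $d$-dimensional; if $\bmu = \bzero$ the convolution is trivial and $\varSigma = \Cov(\bp)$, recovering the driftless case. Everything else — regular variation of $\sqrt{x}$, differentiability of $x$, the passage from conditional to unconditional convergence — is immediate from the cited general theorems.
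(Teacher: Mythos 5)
Your proposal is correct and follows essentially the same route as the paper's own proof: verify Assumption \textbf{(A)} via the classical multivariate CLT with $a(n)=n$, $b(n)=\sqrt{n}$, $\bv=\bmu$, check that part (c) of Theorem~\ref{Thm:Asymptotic-BRW-on-RRT} applies with $\tilde a=\tilde b=1$, and identify $\Xi$ as the convolution whose covariance is the second-moment matrix $\Cov(\bp)+\bmu^{T}\bmu=\varSigma$. Your explicit bookkeeping of where the extra rank-one term $\bmu^{T}\bmu$ comes from (the Gaussian fluctuation of $\tau_n$) is exactly the point the paper handles by taking $\Lambda=\mbox{Normal}_d(\bzero,\varSigma-\bmu\bmu^{T})$ with $\varSigma$ the uncentered second moment, so the two accounts agree.
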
 

\begin{proof}
Let $X_n$ be the position of the random walk starting with $X_0 \sim U_0$ and independent increments with distribution given by 
$\bp$. From classical Central Limit Theorem \cite{Durrett2010}, we get that 
\begin{equation}
\frac{X_n - n \bmu}{\sqrt{n}} \Rightarrow \mbox{Normal}_d\left(\bzero, \varSigma - \bmu \bmu^T\right),
\end{equation}
where $\bmu$ is the mean of the increment distribution and $\varSigma$ is the second moment. Thus assumption {\bf (A)} holds,
with  $\bv = \bmu$, $a\left(n\right) = n$, $b\left(n\right) = \sqrt{n}$ and 
$\Lambda = \mbox{Normal}_d\left(\bzero, \varSigma - \bmu \bmu^T\right)$. 

We observe that the assumptions in Part (c) of Theorem ~\ref{Thm:Asymptotic-BRW-on-RRT} holds with $\tilde{a} = 1$ and 
$\tilde{b} = 1$. 
This completes the proof of ~\eqref{Equ:Asymptotic-Random-Urn-RW}, by observing that 
$\Xi = \mbox{Normal}_{d}(0,\varSigma)$. 

Finally, ~\eqref{Equ:Asymptotic-Expected-Urn-RW} follows from Theorem ~\ref{Thm:Asymptotic-Expected-Urn}(c).
\end{proof}

\subsubsection{Symmetric $\alpha$-Stable Walks}
\label{SubSubSec:Infinite-Variance}
As an example of an infinite variance case, which could not have been done by the techniques derived in \cite{BaTh2013}, we 
consider the case $\bp$ is a \emph{symmetric $\alpha$-stable} distribution on $\Zbold$, for sake of completeness 
we provide here the definition. 
\begin{definition}
\label{Def: SalphaS}
A distribution $\bp$ is said to have a symmetric $\alpha$-stable distribution, denoted by $S \alpha S$, 
with $0< \alpha \leq 2$, if for any $t \in \bR$,
\begin{equation}
\bE \left[e^{itY}\right]=\exp\left(-\sigma^{\alpha }\lvert t\rvert^{\alpha }\right),
\end{equation}
for some $\sigma > 0$, where $Y \sim \bp$. 
\end{definition}
We further assume that $\bp$ has the following properties. If $Y \sim \bp$, then
\begin{enumerate}
\item[(i)] There exists $0< \alpha \leq 2$, and a slowly varying function $L \left(\cdot\right)$, such that 
\begin{equation}\label{Eq: heavy tail}
n^{\alpha} \bP \left(\lvert Y \rvert >n\right)=L\left(n\right), \text{ for all } n \in \bN; \mbox{\ and}
\end{equation}
\item[(ii)]
\begin{equation}\label{Eq: SymmetryY}
\displaystyle\lim_{n \to \infty}\frac{\bP \left( Y >n\right)}{\bP \left(\lvert Y \rvert >n\right)}=\frac{1}{2}.
\end{equation}
\end{enumerate}

\begin{theorem}
\label{Thm: SalphaSurn}
Consider an infinite color urn model with colors indexed by $S =\Zbold^d$, and kernel $R$ defined by 
~\eqref{Equ:definition-of-R-in-RW}, where $\bp$ is as defined above. 
Suppose the starting configuration is $U_0$. Then there exist sequences $\left(c_n\right)_{n \geq 1}$ 
and $\left(s_n\right)_{n \geq 1}$, 
and a $S \alpha S$-distribution $\Lambda$, where $\alpha$ is as in \eqref{Eq: heavy tail}, such that, 
if we define, 
$P_n^{\text{cs}}$ as the conditional distribution of $\frac{Z_n - c_n}{s_n}$ given $\FF_n$, 
then, as $n \to \infty $,
\begin{equation}
P_{n}^{cs} \stackrel{p}{\longrightarrow} \Lambda \mbox{\ in\ } \PP\left(\Rbold^d\right).
\label{Equ:Asymptotic-Random-Urn-RW-SalphaS}
\end{equation}
In particular, 
\begin{equation}
\frac{Z_{n}- c_n} {s_n} \Rightarrow \Lambda,
\label{Equ:Asymptotic-Expected-Urn-RW-SalphaS}
\end{equation}
as $n \rightarrow \infty$. 

Moreover, we may choose $s_n=\left(\log n\right)^{\frac{1}{\alpha}} h\left(\log n\right)$, $n \geq 1$, 
where $h\left(\cdot\right)$ is a suitable slowly varying function, and $c_n = 0, \,\,\, \forall \,\, n \geq 1$, when
$0 < \alpha \leq 1$; and $c_n = \bE \left[Y\right] \, \log n$, $n \geq 1$, with $Y \sim \bp$, for 
$1 < \alpha \leq 2$. 
\end{theorem}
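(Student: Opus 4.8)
The plan is to verify Assumption \textbf{(A)} for the random walk associated with a symmetric $\alpha$-stable increment distribution and then invoke the general machinery of Theorem \ref{Thm:Asymptotic-Random-Urn} and Theorem \ref{Thm:Asymptotic-Expected-Urn}. First I would set up the associated Markov chain $\left(X_n\right)_{n \geq 0}$: by \eqref{Equ:definition-of-R-in-RW} this is exactly a random walk on $\Zbold^d$ (in fact $\Zbold$, since $\bp$ is supported there) started from $X_0 \sim U_0$ with i.i.d.\ increments distributed as $\bp$. Since the increments have regularly varying tails of index $\alpha$ by \eqref{Eq: heavy tail} and are symmetric in the tail sense of \eqref{Eq: SymmetryY}, the classical domain-of-attraction theory for stable laws (e.g.\ the Feller/Gnedenko--Kolmogorov characterization) gives normalizing sequences $\left(A_n\right)$ and centering constants $\left(B_n\right)$ such that $\frac{X_n - B_n}{A_n} \Rightarrow \Lambda$, where $\Lambda$ is $S\alpha S$. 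The point of the tail condition \eqref{Eq: heavy tail} is precisely that one can take $A_n = n^{1/\alpha} h(n)$ for a suitable slowly varying $h$ (absorbing the slowly varying $L$); and the centering can be taken to be $B_n = 0$ when $0 < \alpha < 1$ (and when $\alpha = 1$ with the exact symmetry available), and $B_n = n\,\bE[Y]$ when $1 < \alpha \le 2$, since then $Y$ is integrable.

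The second step is to translate this into the language of Assumption \textbf{(A)}. In the notation of \textbf{(A)} I would take $b(n) = n^{1/\alpha} h(n)$, which is regularly varying of index $1/\alpha > 0$; and for the centering I would take $a \equiv 0$, $\bv$ arbitrary in the case $0 < \alpha \le 1$, and $a(n) = n$, $\bv = \bE[Y]$ (a scalar here, viewed in $\Rbold$) in the case $1 < \alpha \le 2$. Then Assumption \textbf{(A)} holds with this $a$, $b$, $\bv$ and with the limiting law $\Lambda$ the given $S\alpha S$ distribution. I must then check which part of Theorem \ref{Thm:Asymptotic-BRW-on-RRT} applies. For $0 < \alpha \le 1$ we are in case (b): $a = 0$ and $b$ is regularly varying, so \eqref{Equ:Convergence-of-Pncs-a0} of Theorem \ref{Thm:Asymptotic-Random-Urn} gives $P_n^{\text{cs}} \stackrel{p}{\longrightarrow} \Lambda$ with the scaling $s_n = b(\log n) = (\log n)^{1/\alpha} h(\log n)$ and centering $c_n = 0$. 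For $1 < \alpha \le 2$ we are in case (c): $a(n) = n$ is differentiable with $a'(x) \to \tilde a = 1$, and since $\alpha < 2$ we have $\sqrt{x}/b(x) = x^{1/2 - 1/\alpha}/h(x) \to 0$, so $\tilde b = 0$; hence $\Xi = \Lambda$ and again $P_n^{\text{cs}} \stackrel{p}{\longrightarrow} \Lambda$, now with $c_n = a(\log n)\bv = \bE[Y]\log n$ and $s_n = b(\log n) = (\log n)^{1/\alpha} h(\log n)$. (At the boundary $\alpha = 2$ the increment has finite variance and this is covered by Theorem \ref{Thm:Finite-Varinace-Walk}, so in the $S\alpha S$ statement one may regard the interesting range as $\alpha < 2$; I would remark on this.) Finally, \eqref{Equ:Asymptotic-Expected-Urn-RW-SalphaS} follows by taking expectations as in Theorem \ref{Thm:Asymptotic-Expected-Urn}, since $c_n$ and $s_n$ are non-random.

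The main obstacle I anticipate is the bookkeeping around the choice of the slowly varying function $h$ and the precise centering in the delicate case $\alpha = 1$. The domain-of-attraction normalization $A_n$ is only determined up to asymptotic equivalence, and one must check that the choice $A_n = n^{1/\alpha}h(n)$ can indeed be arranged from the hypothesis \eqref{Eq: heavy tail} — this is a standard but slightly fiddly consequence of Karamata theory (the function $x \mapsto x^2 \bP(|Y| > x) + \int_{|y|\le x} y^2\, \bp(dy)$ is regularly varying of index $2 - \alpha$, and its inverse gives $A_n$); I would cite a standard reference (Feller, Vol.\ II, or Ibragimov--Linnik) rather than reprove it. The symmetry condition \eqref{Eq: SymmetryY} is exactly what guarantees the limit is $S\alpha S$ rather than a skewed stable law, and for $\alpha = 1$ it is what allows the centering to be taken as zero (otherwise an additional $\log$-type centering term would appear); I would flag this explicitly. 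Once the normalization is pinned down, the rest is a direct application of the already-established Theorem \ref{Thm:Asymptotic-BRW-on-RRT}(b)--(c) and requires no new ideas, since the embedding via the random recursive tree has already done the work of replacing $n$ by $\log n$ in the scaling.
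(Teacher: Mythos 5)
Your proposal follows essentially the same route as the paper's proof: verify Assumption \textbf{(A)} via the classical stable limit theorem for the associated random walk, with $a \equiv 0$ for $0 < \alpha \le 1$ and $a(x) = x$, $\bv = \bE[Y]$ for $1 < \alpha \le 2$, $b(x) = x^{1/\alpha}h(x)$, and then apply Theorem \ref{Thm:Asymptotic-Random-Urn}(b)--(c) and Theorem \ref{Thm:Asymptotic-Expected-Urn}(b)--(c) with $\tilde b = 0$. Your additional remarks on the Karamata normalization, the $\alpha = 1$ centering, and the $\alpha = 2$ boundary are careful elaborations of points the paper leaves implicit, but the argument is the same.
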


\begin{proof}
Let $X_n$ be the position of the random walk starting with $X_0 \sim U_0$ and independent increments with distribution given by 
$\bp$. From \cite{GaSe_73}, we get that 
\begin{equation}
\frac{X_n - a\left(n\right) \bv}{b\left(n\right)} \Rightarrow \Lambda,
\end{equation}
where $\Lambda$ is a $S \alpha S$-distribution with $\alpha$ is as in \eqref{Eq: heavy tail}. Moreover, we may choose, 
\[
a\left(x\right) = \left\{ 
                  \begin{array}{ll}
                  0 & \mbox{if\ } 0 \leq \alpha \leq 1; \\
                  x & \mbox{otherwise},
                  \end{array}
                  \right.
\]
for $x \geq 0$, and
\[
\bv  = \left\{ 
      \begin{array}{ll}
      0 & \mbox{if\ } 0 \leq \alpha \leq 1; \\
      \bE\left[Y\right] & \mbox{otherwise},
      \end{array}
      \right.
\]
where $Y \sim \bp$, and
\[
b\left(x\right) = x^{\frac{1}{\alpha}} h\left(x\right), \,\,\, x \in \Rbold_+, 
\]
for some slowly varying function $h: \Rbold_+ \rightarrow \Rbold_+$. This completes the proof by using 
Theorem ~\ref{Thm:Asymptotic-Random-Urn}(b) \& (c) and Theorem ~\ref{Thm:Asymptotic-Expected-Urn}(b) \& (c), by observing that 
in this case $\tilde{b} = 0$. 
\end{proof}

\subsection{Urn Models Associated with Periodic Random Walk on $\Rbold^d$}
\label{Subsec: PerRW}
\begin{figure}[H]
\centering
\includegraphics [scale=0.45] {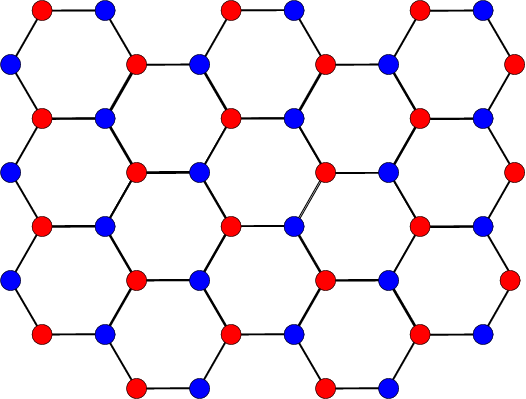}
\caption{Hexagonal Lattice}
\label{Fig:Tri}
\end{figure}
Let $\mathbb{H}=\left(V,E\right)$ be the hexagonal lattice in $\mathbb{R}^{2}$ [see Figure \ref{Fig:Tri}]. The vertex set 
can easily be partitioned into two non-empty parts, $V=V_{1}\cup V_{2}$, where $V_{1}\text{ and }V_{2}$ are disjoint, and the random walk 
$\Hbold$ is then a \emph{periodic} chain. If the replacement kernel be denoted by $R$, then corresponding urn scheme with colors
indexed by $\Hbold$, is not covered by the earlier stated Theorem ~\ref{Thm:Finite-Varinace-Walk}. For studying such cases, we
consider the following slightly more general type of random walk on $\Rbold^d$. 

Let $\{Y_j(i), 1\leq i\leq k,\mbox{ }j\geq 1\}$ be a collection of independent random $d$-dimensional vectors, such that, for each fixed $i\in \{1,2,\ldots k\}, \mbox{ }\left(Y_j(i)\right)_{j\geq 1}$ are i.i.d. We further assume that for each fixed $1\leq i\leq k,$ there exists a finite non-empty set $B_{i}\subset \bR^d$, such that,
$\bP\left(Y_1(i)\in B_{i}\right)=1$, and $B_i\cap B_j = \emptyset$, for any $1 \leq i,j\leq k$. That is, for each $ i \in \{1,2,\ldots,k\}$, we assume that the law of $Y_1(i)$ is bounded. For $1 \leq i\leq k,$ we shall write 
 \begin{equation}\label{Eq: MomentsPerRW}
\begin{array}{rcl}
\bmu(i) &:=&\bE \left[Y_{1}(i)\right],\\
\overline{\bmu}&:=&\frac{1}{k}\displaystyle \sum_{i=1}^{k}\bmu(i),\\
\varSigma(i)&:=& \bE \left[Y^T_{1}(i) Y_{1}(i)\right].
\end{array}
\end{equation}
We further assume that $\varSigma(i)$ is positive definite, for each $1\leq i \leq k.$ Let us denote by $\varSigma^{1/2}(i)$
the unique \emph{positive definite square root} of $\varSigma(i).$ Note that, then 
$\overline{\varSigma}=\frac{1}{k}\textstyle \sum_{i=1}^{k}\varSigma(i)$ is also \emph{positive definite}. We denote by 
$\overline{\varSigma}^{1/2},$ the unique \emph{positive definite square root} of $\overline{\varSigma}$.

For $n=mk+r$, where $m\in \bN \cup \{0\}$, and $0\leq r<k$, let 
\[X_{n}=X_{mk}+Y_{m+1}(1)+Y_{m+1}(2)+\ldots +Y_{m+1}(r+1),   
\]be the $k$-periodic random walk with increments $\{Y_j(i), 1\leq i\leq k,\mbox{ }j\geq 1\}$.

In the remainder of this subsection, we will consider an urn model $\left(U_n\right)_{n \geq 0}$, with colors indexed by $S = \Rbold^d$, 
starting at some distribution $U_0$ on $\Rbold^d$ and with a replacement kernel $R$ associated with a periodic random walk with periodic
increments as given above. 

\begin{theorem}
\label{Thm: PeriodicRw}
Consider an infinite color urn model with colors indexed by $S =\Rbold^d$, and kernel $R$ as given above. Suppose the
starting configuration is $U_0$. If we define, 
\[
P_n^{\text{cs}}\left(A\right) :=\frac{U_n}{n+1}\left(\sqrt{\log n}A \overline{\varSigma}^{1/2}  +  \overline{\bmu} \log n\right), 
\,\,\, A \in \BB_{{\mathbb R}^d}, 
\]
where
\[
xA\varSigma^{1/2}: =\{xy \varSigma^{1/2}\colon y \in A\},
\]
then, as $n \to \infty $,
\begin{equation}
P_{n}^{cs} \stackrel{p}{\longrightarrow} \Phi_{d} \mbox{\ in\ } \PP\left(\Rbold^d\right).
\label{Equ:Asymptotic-Random-Urn-Periodic-RW}
\end{equation}
In particular, 
\begin{equation}
\frac{Z_{n}-\overline{\bmu} \, \log n} {\sqrt{\log n}} \Rightarrow \mbox{Normal}_{d}(0,\overline{\varSigma}),
\label{Equ:Asymptotic-Expected-Urn-Periodic-RW}
\end{equation}
as $n \rightarrow \infty$
\end{theorem}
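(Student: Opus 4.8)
The plan is to recognize the associated Markov chain as the $k$-periodic random walk, verify Assumption~\textbf{(A)} for it via a block central limit theorem, and then read off~\eqref{Equ:Asymptotic-Random-Urn-Periodic-RW} and~\eqref{Equ:Asymptotic-Expected-Urn-Periodic-RW} from part~(c) of Theorem~\ref{Thm:Asymptotic-Random-Urn} and Theorem~\ref{Thm:Asymptotic-Expected-Urn}, just as in the proof of Theorem~\ref{Thm:Finite-Varinace-Walk}. Let $\left(X_n\right)_{n \geq 0}$ be the associated Markov chain on $S = \Rbold^d$ with kernel $R$ and starting law $U_0$; by construction it is the $k$-periodic walk whose increment out of a position of phase $r$ has the law of $Y_1(r)$, started at $X_0 \sim U_0$. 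The only genuinely new ingredient needed is a limit theorem of the form~\eqref{Equ:Assumption} for $\left(X_n\right)_{n \geq 0}$ whose limit does \emph{not} depend on $U_0$; after that everything is a mechanical application of Section~\ref{Sec:Asymptotic}.

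First I would establish the CLT for the periodic walk. Write $n = mk + r$ with $0 \le r < k$ and group the increments into full blocks $B_j := Y_j(1) + \cdots + Y_j(k)$, $j \ge 1$, so that $X_{mk} = X_0 + \sum_{j=1}^{m} B_j$ and $X_n = X_{mk} + \bigl(Y_{m+1}(1) + \cdots + Y_{m+1}(r)\bigr)$. Since each $Y_1(i)$ has law supported on the \emph{finite} set $B_i$, the partial block $Y_{m+1}(1) + \cdots + Y_{m+1}(r)$ is bounded uniformly in $n$ and $X_0$ is fixed, so both terms are negligible after dividing by $\sqrt{n}$ --- and this is precisely what removes all dependence on $U_0$ from the scaled limit. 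The blocks $\left(B_j\right)_{j \ge 1}$ are i.i.d.\ with mean $k\,\overline{\bmu}$ and finite covariance $\varSigma_B := \sum_{i=1}^{k}\bigl(\varSigma(i) - \bmu(i)^{T}\bmu(i)\bigr)$ (using independence of the $Y_j(i)$ across $i$), so the multivariate CLT for $\sum_{j=1}^{m} B_j$ together with $m/n \to 1/k$ yields
\begin{equation}
\frac{X_n - n\,\overline{\bmu}}{\sqrt{n}}\ \Rightarrow\ \Lambda := \mbox{Normal}_d\!\left(\bzero,\ \tfrac{1}{k}\varSigma_B\right).
\end{equation}
Hence Assumption~\textbf{(A)} holds with $\bv = \overline{\bmu}$, $a(n) = n$ and $b(n) = \sqrt{n}$.

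With (A) in hand, $a(x) = x$ is differentiable with $a' \equiv 1$, so $\tilde{a} = 1$, and $b(x) = \sqrt{x}$ is regularly varying with $\sqrt{x}/b(x) \equiv 1$, so $\tilde{b} = 1$; thus the hypotheses of part~(c) of Theorem~\ref{Thm:Asymptotic-BRW-on-RRT}, Theorem~\ref{Thm:Asymptotic-Random-Urn} and Theorem~\ref{Thm:Asymptotic-Expected-Urn} all hold. Theorem~\ref{Thm:Asymptotic-Expected-Urn}(c) gives $\frac{Z_n - \overline{\bmu}\log n}{\sqrt{\log n}} \Rightarrow \Xi$, where $\Xi$ is the convolution of $\Lambda$ with $\mbox{Normal}\left(0,1\right)\overline{\bmu}$; since both factors are (possibly degenerate) centered Gaussians, $\Xi = \mbox{Normal}_d\!\left(\bzero,\ \tfrac{1}{k}\varSigma_B + \overline{\bmu}^{T}\overline{\bmu}\right)$, and expanding $\varSigma_B$ identifies this covariance with $\overline{\varSigma}$ --- the exact analogue of the adjustment replacing $\varSigma - \bmu^{T}\bmu$ by $\varSigma$ in Theorem~\ref{Thm:Finite-Varinace-Walk} (and vacuous for the hexagonal lattice, where every $\bmu(i) = \bzero$). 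This proves~\eqref{Equ:Asymptotic-Expected-Urn-Periodic-RW}. For~\eqref{Equ:Asymptotic-Random-Urn-Periodic-RW} one argues identically from Theorem~\ref{Thm:Asymptotic-Random-Urn}(c); the factor $\overline{\varSigma}^{1/2}$ built into the definition of $P_n^{\text{cs}}$ is exactly the linear change of variables carrying $\mbox{Normal}_d\!\left(\bzero,\overline{\varSigma}\right)$ to the standard Gaussian, so $P_n^{\text{cs}} \stackrel{p}{\longrightarrow} \Phi_d$ in $\PP\left(\Rbold^d\right)$.

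The main obstacle is confined to the first step: one must check that the periodic CLT produces a limit law common to \emph{all} starting distributions $U_0$ (so that Assumption~\textbf{(A)} may be invoked) --- which is exactly what boundedness of the per-phase increments $Y_1(i)$ buys --- and then track the block covariance $\varSigma_B$ carefully enough that, after the fluctuations of $\tau_n$ (of order $\sqrt{\log n}$) are fed through $a(\cdot)$ and convolved in, the bookkeeping closes at exactly $\overline{\varSigma}$. Everything past the verification of Assumption~\textbf{(A)} is an immediate consequence of the results of Sections~\ref{Sec: Main Results} and~\ref{Sec:Asymptotic}.
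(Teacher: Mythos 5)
Your proposal follows the paper's proof essentially verbatim in structure: the paper likewise reduces the theorem to verifying Assumption \textbf{(A)} for the $k$-periodic walk, i.e.\ to showing $\frac{X_n - n\overline{\bmu}}{\sqrt{n}} \Rightarrow \mbox{Normal}_d\left(\bzero, \overline{D}\right)$ with $\overline{D} = \frac{1}{k}\sum_{i=1}^{k}\Var\left(Y_1(i)\right)$, which it dismisses as a ``standard application of the i.i.d.\ Central Limit Theorem,'' and then applies Theorem~\ref{Thm:Asymptotic-Random-Urn}(c) and Theorem~\ref{Thm:Asymptotic-Expected-Urn}(c). Your blocking argument (i.i.d.\ full blocks $B_j$, with the partial block and $X_0$ bounded and hence negligible after dividing by $\sqrt{n}$, which is what makes the limit independent of $U_0$) is exactly the omitted ``standard application,'' and your $\frac{1}{k}\varSigma_B$ is the paper's $\overline{D}$.

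One caveat, which you inherit from the paper rather than introduce: the final identification $\frac{1}{k}\varSigma_B + \overline{\bmu}^{T}\overline{\bmu} = \overline{\varSigma}$ does not close in general. Expanding, $\frac{1}{k}\varSigma_B + \overline{\bmu}^{T}\overline{\bmu} = \overline{\varSigma} - \bigl(\frac{1}{k}\sum_{i=1}^{k}\bmu(i)^{T}\bmu(i) - \overline{\bmu}^{T}\overline{\bmu}\bigr)$, and the subtracted term is a positive semidefinite ``between-phase'' dispersion of the means which vanishes only when all the $\bmu(i)$ coincide. So the limiting covariance is $\overline{D} + \overline{\bmu}^{T}\overline{\bmu}$, which equals $\overline{\varSigma}$ only in that special case --- in particular in the hexagonal-lattice application, where every $\bmu(i) = \bzero$, as you observe. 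Your assertion that ``the bookkeeping closes at exactly $\overline{\varSigma}$'' is therefore not quite right as stated, but the discrepancy lies in the theorem's statement (and the paper's own two-line proof) rather than in your argument, every other step of which is sound and matches the paper's route.
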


\begin{proof}
We first note that by Theorem ~\ref{Thm:Asymptotic-Random-Urn}(c) and Theorem ~\ref{Thm:Asymptotic-Expected-Urn}(c), it is
enough to show that 
\begin{equation}
\label{Eq: CITwith mean}
\frac{X_{n}- n \overline{\bmu}}{\sqrt{n}}\Rightarrow N_{d}\left(0, \overline{D}\right),
\end{equation}
where $\overline{D}=\frac{1}{k} \textstyle \sum_{i=1}^{k} \Var\left(Y_1\left(i\right)\right)$. This follows from
standard application of i.i.d. Central Limit Theorem \cite{Durrett2010}. 
\end{proof}

As an application of the Theorem ~\ref{Thm: PeriodicRw}, we now consider our starting example of the random walk on hexagonal lattice. 
Let $\mathbb{H}=\left(V,E\right)$ be the hexagonal lattice in $\mathbb{R}^{2}$ [see Figure \ref{Fig:Tri}]. The vertex set 
$V=V_{1}\cup V_{2}$, where $V_{1}\text{ and }V_{2}$ are disjoint. $V_{1}\text{ and } V_{2}$ are defined as follows:
\[V_{1,1}:=\left\{1,\omega,\omega^{2}\right\}, \text{ where } \omega \text{ is a complex cube root of unity,} 
\] and 
\[ V_{2,1}:=\left\{v+1, v+\omega, v+\omega^{2} \colon v \in V_{1,1}\right\}.
\]
For any $n \geq 2$, 
\[V_{1,n}:=\left\{v-1, v-\omega, v-\omega^{2}\colon v \in V_{2,n-1}\right\},
\] and 
\[V_{2,n}=\left\{v+1, v+\omega, v+\omega^{2} \colon  
v \in V_{1,n}\right\}.
\] Finally, $V_{1}=\textstyle \cup_{j\geq 1}V_{1,j}$ and $V_{2}=\textstyle \cup _{j\geq 1}V_{2,j}$. For any pair of vertices $v,w \in V$, we draw an edge between them, if and only if, either of the following two cases occur:
\begin{enumerate}[(i)]
\item $v \in V_1\text{ and } w \in V_2$ and $w=v+u$ for some $u\in \{1, \omega, \omega^2\}$, or 
\item $v \in V_2\text{ and } w \in V_1$ and $w=v+u$ for some $u \in \{-1, -\omega, -\omega^2\}$. 
\end{enumerate}

To define the random walk on $\mathbb{H}$, let us consider $\left\{Y_{j}(i)\colon i=1,2, \text{ }j \geq 1\right\}$ to be a sequence of 
independent random vectors such that $\left(Y_{j}(i)\right)_{j\geq 1}$ are i.i.d for every fixed $i=1,2$. 
Let $Y_{1}(1) \sim \mathrm{Unif}\left\{1,\omega,\omega^{2}\right\},$ and 
$Y_{1}(2) \sim \mathrm{Unif}\left\{-1,-\omega,-\omega^{2}\right\}$. One can now define a random walk on $\mathbb{H}$, 
with the increments $\left\{Y_{j}(i)\colon i=1,2, \text{ }j \geq 1\right\}$. Needless to say, this random walk has period $2$. 

\begin{cor}
\label{Thm: Hexagonal Lattice}
Consider an infinite color urn model with colors indexed by $S =\Hbold$, and kernel $R$ as given above. Suppose the
starting configuration is $U_0$. If we define, 
\[
P_n^{\text{cs}}\left(A\right) :=\frac{U_n}{n+1}\left(2 \sqrt{\log n} A \right), 
\,\,\, A \in \BB_{{\mathbb R}^d}, 
\]
then, as $n \to \infty $,
\begin{equation}
P_{n}^{cs} \stackrel{p}{\longrightarrow} \Phi_{2} \mbox{\ in\ } \PP\left(\Rbold^2\right).
\label{Equ:Asymptotic-Random-Urn-H-RW}
\end{equation}
In particular, 
\begin{equation}
\frac{2 Z_{n}} {\sqrt{\log n}} \Rightarrow \mbox{Normal}_{2}(0,\mathbb{I}_2),
\label{Equ:Asymptotic-Expected-Urn-H-RW}
\end{equation}
as $n \rightarrow \infty$
\end{cor}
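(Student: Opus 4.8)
The plan is to obtain Corollary~\ref{Thm: Hexagonal Lattice} as a direct specialization of Theorem~\ref{Thm: PeriodicRw} with $d=2$ and period $k=2$, after identifying $\mathbb{C}$ with $\Rbold^2$ in the usual way. First I would check that the increment structure fits the hypotheses of Theorem~\ref{Thm: PeriodicRw}: with $Y_1(1)\sim\mathrm{Unif}\{1,\omega,\omega^2\}$ supported on the finite set $B_1=\{1,\omega,\omega^2\}$ and $Y_1(2)\sim\mathrm{Unif}\{-1,-\omega,-\omega^2\}$ supported on $B_2=\{-1,-\omega,-\omega^2\}$, both supports are finite and non-empty, and $B_1\cap B_2=\emptyset$ because $B_1\cup B_2$ is exactly the set of sixth roots of unity (the elements of $B_1$ have arguments $0,\tfrac{2\pi}{3},\tfrac{4\pi}{3}$, those of $B_2$ have arguments $\pi,\tfrac{5\pi}{3},\tfrac{\pi}{3}$). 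Hence the walk $\Hbold$ on the hexagonal lattice is precisely a $k$-periodic walk of the type treated in Theorem~\ref{Thm: PeriodicRw}.

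Next I would compute the relevant moments. Since $1+\omega+\omega^2=0$, we get $\bmu(1)=\tfrac13(1+\omega+\omega^2)=0$ and $\bmu(2)=-\tfrac13(1+\omega+\omega^2)=0$, so $\overline{\bmu}=0$ and the centering term $\overline{\bmu}\log n$ disappears. For the second-moment matrices $\varSigma(i)=\bE\!\left[Y_1^T(i)Y_1(i)\right]$, the key observation is that multiplication by $\omega$ acts on $\Rbold^2$ as rotation by $\tfrac{2\pi}{3}$, an orthogonal map $O$ which permutes $B_1$ (and likewise $B_2$); therefore $O\varSigma(i)O^T=\varSigma(i)$ for $i=1,2$, which forces $\varSigma(i)=c_i\,\mathbb{I}_2$ for scalars $c_i$. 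Taking traces and using $\|Y_1(i)\|\equiv 1$ gives $2c_i=\bE\!\left[\|Y_1(i)\|^2\right]=1$, so $\varSigma(1)=\varSigma(2)=\tfrac12\mathbb{I}_2$, which is positive definite; hence $\overline{\varSigma}=\tfrac12\mathbb{I}_2$ is positive definite with $\overline{\varSigma}^{1/2}=\tfrac{1}{\sqrt2}\mathbb{I}_2$ (and, since the means vanish, $\overline{D}=\overline{\varSigma}$ as well, so the hypothesis of Theorem~\ref{Thm: PeriodicRw} used in its proof is met).

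Finally I would substitute these values into Theorem~\ref{Thm: PeriodicRw}. With $\overline{\bmu}=0$ and $\overline{\varSigma}^{1/2}$ a scalar multiple of the identity, the set $\sqrt{\log n}\,A\,\overline{\varSigma}^{1/2}+\overline{\bmu}\log n$ is just a dilation of $\sqrt{\log n}\,A$, so the conclusion $P_n^{cs}\cp\Phi_2$ of Theorem~\ref{Thm: PeriodicRw} reads exactly as in \eqref{Equ:Asymptotic-Random-Urn-H-RW} once the scalar is absorbed into the argument, and the ``in particular'' clause of Theorem~\ref{Thm: PeriodicRw} (equivalently Theorem~\ref{Thm:Asymptotic-Expected-Urn}(c)) yields $Z_n/\sqrt{\log n}\Rightarrow\mathrm{Normal}_2(0,\overline{\varSigma})=\mathrm{Normal}_2(0,\tfrac12\mathbb{I}_2)$, which after the indicated rescaling is the displayed $\mathrm{Normal}_2(0,\mathbb{I}_2)$ limit in \eqref{Equ:Asymptotic-Expected-Urn-H-RW}.

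I expect no serious obstacle: the statement is a worked example rather than a new theorem, and the only points requiring care are (i) checking $B_1\cap B_2=\emptyset$ so that Theorem~\ref{Thm: PeriodicRw} applies verbatim, and (ii) correctly tracking the $\mathbb{C}\cong\Rbold^2$ identification when computing $\overline{\varSigma}$ — in particular pinning down the scalar $\tfrac12$ produced by the three-fold rotational symmetry, since that constant is precisely what fixes the normalization in the displayed limits.
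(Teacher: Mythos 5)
Your proposal is correct and follows essentially the same route as the paper: reduce to Theorem~\ref{Thm: PeriodicRw} with $k=2$, check $\overline{\bmu}=0$ from $1+\omega+\omega^2=0$, and identify $\overline{\varSigma}=\tfrac12\mathbb{I}_2$. The only (cosmetic) difference is that you obtain $\varSigma(i)=\tfrac12\mathbb{I}_2$ via the rotational symmetry of the support plus a trace argument, whereas the paper computes the entries $\sigma_{1,1},\sigma_{2,2},\sigma_{1,2}$ explicitly from $\mathit{Re}(\omega)$ and $\mathit{Im}(\omega)$; your symmetry argument is a clean shortcut that reaches the same constant.
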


\begin{proof}
First of all we note that, it is enough to show that $\overline{\varSigma} = \frac{1}{2} \mathbb{I}_2$. 

Now, since $1+\omega +\omega ^2=0$, so for the random walk on the hexagonal lattice, $\bmu(1)=\bmu(2)=0$. Therefore $\overline{\bmu}=0$. 
Let  
\begin{eqnarray*}\label{Eq: varmatrixforhexawalk} \varSigma(1):=
\begin{pmatrix}
 \sigma_{1,1} & \sigma_{1,2}\\
 \sigma_{2,1} & \sigma_{2,2}
\end{pmatrix}
\end{eqnarray*} Writing
$Y_{1}(1):=\left(Y_{1}^{(1)}(1), Y_{1}^{(2)}(1)\right)$, observe that 
\begin{equation*}
\sigma_{1,1}=\bE\left[\left(Y_{1}^{(1)}(1)\right)^2\right] \text{ and } \sigma_{2,2}=\bE\left[\left(Y_{1}^{(2)}(1)\right)^2\right]. 
\end{equation*} Also, 
\begin{equation*}
\sigma_{1,2} = \sigma_{2,1}=\bE\left[Y_{1}^{(1)}(1)Y_{1}^{(2)}(1)\right].
\end{equation*}
Writing $\omega= \mathit{Re}\left(\omega \right)+ i \mathit{Im}\left(\omega\right)$, it is easy to see that 
\begin{equation*}
\sigma_{1,1}=\frac{1}{3}\left(1+\left(\mathit{Re}\left(\omega \right)\right)^2+ \left(\mathit{Re}\left(\omega^2 \right)\right)^2\right). 
\end{equation*} Since $\mathit{Re}\left(\omega \right)= \mathit{Re}\left(\omega^2 \right)$, therefore, 
\begin{equation*}
\sigma_{1,1}=\frac{1}{3}\left(1+2\left(\mathit{Re}\left(\omega \right)\right)^2\right).
\end{equation*}
Since  
$\omega=\frac{1}{2}+\mathit{i}\frac{\sqrt{3}}{2}$, therefore, this implies 
$\sigma_{1,1}=\frac{1}{2}$. Similarly, since $\mathit{Im}\left(\omega\right)=-\mathit{Im}\left(\omega^{2}\right)$, 
\begin{equation*}
\sigma_{2,2}=\frac{1}{3}\left(\left(\mathit{Im}\left(\omega\right)\right)^2+ 
\left(\mathit{Im}\left(\omega^{2}\right)\right)^2\right)= \frac{2}{3}\left(\mathit{Im}\left(\omega\right)\right)^2=\frac{1}{2}.
\end{equation*}
Since, $\mathit{Re}\left(\omega \right)= \mathit{Re}\left(\omega^2 \right)$, and 
$\mathit{Im}\left(\omega\right)=-\mathit{Im}\left(\omega^{2}\right)$,
\begin{equation*}
\sigma_{1,2} = \sigma_{2,1}=\frac{1}{3}\left(\mathit{Re}\left(\omega \right)\mathit{Im}\left(\omega\right)+
\mathit{Re}\left(\omega^2 \right)\mathit{Im}\left(\omega^{2}\right)\right)=0.
\end{equation*}
This proves that $\varSigma(1)=\frac{1}{2}\mathbb{I}_{2}$. Similar calculations show that 
$\varSigma(2)=\frac{1}{2}\mathbb{I}_{2}$. This implies that 
$\overline{\varSigma}=\frac{1}{2}\varSigma(1)+\frac{1}{2}\varSigma(2)=\frac{1}{2}\mathbb{I}_{2}$.
This completes the proof.
\end{proof}

\section{Conclusion}
\label{Sec:Conclusion}
We have presented in this paper a new method for studying finite or infinite color \emph{balanced} urn schemes through 
their representation in the associated Markov chain. It turns out that for any general balanced urn scheme, the sequence of
observed colors is a realization of a branching Markov chain with a modified kernel on the random recursive tree. 
We have shown using such representation that under fairly general conditions one can derive asymptotic of various urn schemes, 
which otherwise may be very difficult to find. As illustrated by several examples, essentially the asymptotic may be
derived if the underlying Markov chain has a proper scaling limit. We believe that this novel approach will 
provide a better understanding of these new type of urn schemes. \\

\noindent
{\bf Remark:} While preparing the manuscript we were informed by C\'{e}cile Mailler and Jean-Fran\c{c}ois Marckert 
that they are also working on similar problems. 
We are happy to note this development and 
we hope that more attention will be given in studying infinite color urn schemes.

\section*{Acknowledgement}
The authors are grateful to Krishanu Maulik and Codina Cotar for various discussions they had with them at various time points.
We are also thankful to Tatyana Turova, whose comments helped us to fix a glitch, which we had in an earlier version of the paper.

\bibliographystyle{plain}

\bibliography{RT}

\end{document}